\newtheorem{theorem}{Theorem}[section]
\newtheorem{claim}[theorem]{Claim}
\newtheorem{lemma}[theorem]{Lemma}
\newtheorem{proposition}[theorem]{Proposition}
\newtheorem{conclusion}[theorem]{Conclusion}
\newtheorem{observation}[theorem]{Observation}
\newtheorem{corollary}[theorem]{Corollary}
\theoremstyle{definition}
\newtheorem{definition}[theorem]{Definition}
\newtheorem{problem}[theorem]{Problem}
\newtheorem{conjecture}[theorem]{Conjecture}
\newtheorem{discussion}[theorem]{Discussion}
\newtheorem{fact}[theorem]{Fact}
\newtheorem{hypothesis}[theorem]{Hypothesis}
\theoremstyle{remark}
\newtheorem{remark}[theorem]{Remark}
\newtheorem{question}[theorem]{Question}
\newtheorem{notation}[theorem]{Notation}
\newcommand{\ucp}{{\rm ucp}}
\newcommand{\iso}{{\rm iso}}
\newcommand{\sort}{{\rm sort}}
\newcommand{\supp}{{\rm supp}}
\newcommand{\ob}{{\rm Ob}}
\newcommand{\Hom}{{\rm Hom}}
\newcommand{\Ord}{{\rm Ord}}
\newcommand{\Aut}{{\rm Aut}}
\newcommand{\aut}{{\rm Aut}}
\newcommand{\id}{{\rm id}}
\newcommand{\bfd}{{\mathbf d}}
\newcommand{\bfc}{{\mathbf c}}
\newcommand{\bfp}{{\mathbf p}}
\newcommand{\bfx}{{\mathbf x}}
\newcommand{\bfH}{{\mathbf H}}
\newcommand{\bfy}{{\mathbf y}}
\newcommand{\dom}{{\rm dom}}
\newcommand{\Rang}{{\rm Rang}}
\newcommand{\Range}{{\rm Range}}
\newcommand{\rest}{{\restriction}}
\newcommand{\Wilog}{{\rm Without loss of generality}}
\newcommand{\mn}{{\medskip\noindent}}
\newcommand{\sn}{{\smallskip\noindent}}
\newcommand{\gB}{{\mathfrak B}}
\newcommand{\gA}{{\mathfrak A}}
\newcommand{\gC}{{\mathfrak C}}
\newcommand{\cH}{{\mathscr H}}
\newcommand{\bbZ}{{\mathbb Z}}
\def\mathunderaccent#1#2 {\let\theaccent#1\skewfactor#2
\mathpalette\putaccentunder}
\def\putaccentunder#1#2{\oalign{$#1#2$\crcr\hidewidth
\vbox to.2ex{\hbox{$#1\skew\skewfactor\theaccent{}$}\vss}\hidewidth}}
\def\name{\mathunderaccent\tilde-3 }
\numberwithin{equation}{section}
\begin{document}


\baselineskip=17pt


\title {Naturality and Definability III}
\author[M. Asgharzadeh]{Mohsen Asgharzadeh}
\address{Mohsen Asgharzadeh, Hakimiyeh, Tehran, Iran.}
\email{mohsenasgharzadeh@gmail.com}

\author[M.  Golshani]{Mohammad Golshani}
\address{Mohammad Golshani, School of Mathematics, Institute for Research in Fundamental Sciences (IPM), P.O.\ Box:
	19395--5746, Tehran, Iran.}
\email{golshani.m@gmail.com}

\author[S. Shelah] {Saharon Shelah}
\address{Einstein Institute of Mathematics\\
Edmond J. Safra Campus, Givat Ram\\
The Hebrew University of Jerusalem\\
Jerusalem, 91904, Israel\\
 and \\
 Department of Mathematics\\
 Hill Center - Busch Campus \\
 Rutgers, The State University of New Jersey \\
 110 Frelinghuysen Road \\
 Piscataway, NJ 08854-8019 USA}
\email{shelah@math.huji.ac.il}
\urladdr{http://shelah.logic.at}

\begin{abstract}
In this paper, we explore the relationship between the notions of naturality from category theory and definability from model theory. We study their interactions and present three main results. First, we show that under some mild conditions, naturality implies definability. Second, using reverse Easton iteration of Cohen forcing notions, we construct a transitive model of ZFC in which every uniformisable construction is weakly natural. Finally, we demonstrate that if $F$ is a natural construction on a class $\mathcal{K}$ of structures, represented by some formula, then it is uniformly definable without the need for extra parameters. Our results resolve some questions posed by Hodges and Shelah.
\end{abstract}

\subjclass[2020]{Primary 08A35; 03E35; 18A15. 
}

\keywords {Automorphisms of algebraic structures; definable functors; lifting morphisms; forcing techniques; natural constructions; sorted modeles; uniformity.}
\maketitle

\section {Introduction}

We aim to explore the interplay between the notions of naturality from category theory and definability from  model theory. This continues the investigation initiated by Hodges and Shelah in \cite{Sh:160} and \cite{Sh:301}.

In this paper, naturality is understood in the sense of Eilenberg and Mac Lane \cite{EIL}. Specifically, for a class $\mathcal{K}$ of structures, the construction $A \mapsto F(A)$, where $A \in \mathcal{K}$, is natural if for every $A \in \mathcal{K}$ and every automorphism $a$ of $A$, there exists an automorphism $f(a)$ of $F(A)$ such that the following diagram commutes:
$$\xymatrix{
	&& A\ar[d]_{a}\ar[r]^{\subseteq}&F(A) \ar[d]^{f(a)}&\\
	&&  A \ar[r]^{\subseteq}& F(A)
	&&&}$$
and the assignment $A\mapsto F(A)$ has the following two properties:
\begin{enumerate}
\item  $f(\id_A)=\id_{F(A)}$, where $\id_A$ is the identity function on $A$,
\item for automorphisms $a, b$ of $A$,   $f(ab)=f(a)f(b)$.
\end{enumerate}

We remark that Hodges and Shelah \cite{Sh:301,Sh:160} have other requirements, for example,
that there is an embedding of A into $F(A)$ and that $F(A)$ is determined
up to isomorphism.

The initial challenging example for naturality was the dual of vector spaces, as discussed in \cite{EIL}. Hodges and Shelah \cite[Example 1]{Sh:160} observed that the constructions of algebraic closure of fields and of divisible hulls are not natural.
Recall that a  minimal divisible abelian group  containing an abelian group $G$ is called a divisible hull  of $G$. According to \cite[Theorem IV. 2.7]{fuchs}, such a divisible hull exists and is unique, and is denoted as $E(G)$. The historical origin of the question about the naturality of $E(-)$ comes from \cite[Page 10]{lamb}, where Lambek observed that the divisible hull $E(G)$ of an abelian group $G$ is not natural.

Given a category $\mathcal{C}$, let $\ob(\mathcal{C})$ denote the class of objects of $\mathcal{C}$, and for $X, Y \in \ob(\mathcal{C})$, let $\Hom_{\mathcal{C}}(X,Y)$ denote the class of morphisms from $X$ to $Y$ in $\mathcal{C}$. Now, let $G: \mathcal{C} \to \mathcal{D}$ be a functor, and recall that a functor $F: \mathcal{D} \to \mathcal{C}$ is  left adjoint  to $G$ if there is a canonical isomorphism
\[
\Hom_{\mathcal{C}}(F(X), Y) \cong \Hom_{\mathcal{D}}(X, G(Y)),
\]
where $X \in \ob(\mathcal{D})$ and $Y \in \ob(\mathcal{C})$. For more details, see Mac Lane’s book \cite[Page 81]{ma}. Suppose
 $G$ is the forgetful functor on structures and that $F$ is left adjoint to $G$.
 Recall from \cite{ma} that 
$F$ defines a natural construction, which can serve as a source for generating several other natural constructions.

Let $\mathcal{T}$ be the category of torsion-free abelian groups. For example, Lambek \cite[Page 11]{lamb} pointed out that the construction $G \mapsto E(G)$ from $\mathcal{T}$ is natural. Indeed, the divisible hull of $G \in \ob(\mathcal{T})$ can also be obtained as $\mathbb{Q} \otimes_\mathbb{Z} G$, as seen in \cite[Example IV. 2.9]{fuchs}. Hence, $E: \mathcal{T} \to \mathcal{T}$ is just the tensor product, which is left adjoint to the hom-functor, as explained in \cite[Page 80]{ma}. Therefore, the result \cite[Example 3]{Sh:160} confirms that $E: \mathcal{T} \to \mathcal{T}$ is a natural construction.
This line of research is further explored in \cite{adm}.

Let \( H \) and \( G \) be two (not necessarily abelian) groups. Suppose  
\( \varphi: H \twoheadrightarrow G \) is a surjective group homomorphism.  
Following \cite[Definition 3.1(i)]{Sh:301}, we say that a map  
\( \psi \in \Hom(G, H) \) splits  \( \varphi \) if  
$
\varphi \circ \psi = \id_G.
$ By \cite[Lemma 1]{Sh:160}, the naturality condition provides a splitting for  
the map \( \Aut(F(A)) \to \Aut(A) \). However, if \( \psi \) does not satisfy  
the group homomorphism property,
then Hodges and Shelah \cite[Definition 3.1(ii)]{Sh:301} introduced the notion  
of a map \( \psi: G \to H \)  ``weakly splitting''  \( \varphi \), provided it satisfies:  

\begin{center}  
	\( \varphi \circ \psi = \id_G \),  
	\( \psi(x^{-1}) = (\psi(x))^{-1} \),  
	\( \psi(e_G) = e_H \).  
\end{center}
Note that in this case, \( \psi \) is not necessarily a group homomorphism.  
Instead, we assume that the induced map belongs to \( \Hom(G, H / \mathcal{Z}(H)) \),  
where \( \mathcal{Z}(H) \) denotes the center of \( H \).  
We say that \( \varphi \) has \textit{lifting} (resp. \textit{weakly lifting})  
if there exists a map \( \psi \) that splits (resp. weakly splits) \( \varphi \).  
This leads to the notion of  weak naturality, as detailed in  
\cite[Definition 3.1]{Sh:301}.
Hodges and Shelah \cite{Sh:160} raised the following question:
\begin{question}\label{Q1}
When does naturality imply definablity?
\end{question}
We employ two-sorted models to address the question at hand. This approach is not new. For instance, Friedman \cite{fri} has already considered constructions in algebra (such as the direct product construction of pairs of groups) as operations from relational structures of a fixed, finite many-sorted relational type to structures of an enlarged many-sorted relational type. Similarly, Hodges and Shelah \cite{Sh:301} framed Question \ref{Q1} in terms of two-sorted models.
Thus, Question \ref{Q1} can naturally be approached within the context of many-sorted model theory.

In \S 2, for the convenience of the readers, we briefly review the concept of many-sorted models. According to its definition, any 2-sorted model $\gB$ provides two groups:
$$
H := \aut(\gB) \quad \text{and} \quad G := \aut(\sort_1(\gB)),
$$
as defined in Definition \ref{2sort}. If the restriction map $\varphi: H \to G$ is well-defined and has a weak lifting $\psi$, we say that $\gB$ gives us a \textit{uni-construction problem}:
$$
\bfc = \langle \gB, \sort_1(\gB), H, G, \varphi, \psi \rangle,
$$
as described in Definition \ref{y5}. The concept of a uni-construction problem will serve as the foundation for weak naturality.
Next, we introduce the concept of a \textit{\(\chi\)-solution} (see Definition \ref{y11}),  
which serves as the formal counterpart to definability.
  
Regarding Question \ref{Q1}, in \S 3, we discuss certain special types of three-sorted models  
and utilize Higman-Neumann-Neumann extensions to establish the following as our first main result:

\begin{theorem}
	\label{a23}
	Let $\chi$ be a cardinal and let $\bfc$ be a uni-construction problem. If $\bfc$   has no lifting, then   in a forcing extension, $\bfc$ has no
	$\chi$-solution.
\end{theorem}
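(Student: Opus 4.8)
The plan is to force with a carefully chosen notion that will destroy any purported $\chi$-solution of $\bfc$, exploiting the hypothesis that $\bfc$ has no lifting in the ground model. The key point is that a $\chi$-solution is (by Definition~\ref{y11}, which I am treating as available) a definition of the would-be splitting $\psi\colon G\to H/K$ by a formula with parameters, uniformly across the relevant class, with $\chi$ controlling the size of the parameters or the domain. If such a solution survived into a forcing extension that adds no new relevant automorphisms but does add enough ``generic'' symmetry, the absoluteness of the definition would manufacture an honest lifting already in the ground model --- contradicting the hypothesis. So the extension must be designed so that (i) the structure $\sort_1(\gB)$ and its automorphism group $G$, together with $H$ and $K$, are essentially preserved (no new automorphisms that matter, or at least the obstruction to lifting is preserved), while (ii) the combinatorial object witnessing $\chi$-solvability is collapsed or trivialised.

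First I would unwind the definitions: spell out exactly what data a $\chi$-solution $\bfc$ provides --- a formula $\theta(x,y,\bar z)$ and parameters $\bar c$ of length $<\chi$ (or from a set of size $<\chi$) such that $\theta(\cdot,\cdot,\bar c)$ defines in $\gB^{\eq}$ a function $G\to H/K$ splitting $\varphi$ --- and note that ``having a lifting'' is the assertion that such a splitting exists with \emph{no} definability constraint. Second, I would add via forcing a large symmetric family of automorphisms or an indiscernible-like configuration inside $\gB$ that is generic over the ground model; the canonical choice here is a product of Cohen-type forcings indexed so as to blow up $\Aut$ or to add a generic ``copy swapping'' permutation, while the reverse Easton machinery already invoked in the paper's second main result suggests the intended forcing is a (tame, highly homogeneous) iteration or product of Cohen forcings. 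Third, I would argue that in $V[G]$ the restriction map $\varphi$, the center $K$, and the non-existence of a lifting are all preserved --- this uses that the forcing is sufficiently mild (e.g.\ homogeneous, or $\chi$-closed on a tail, or adds no new bounded subsets of the relevant cardinals) so that no new sort-1 automorphisms or new liftings appear. Fourth, I would use the genericity: the newly added symmetry acts on any ground-model-definable function in a way it cannot respect, so $\theta(\cdot,\cdot,\bar c)$ cannot still define a splitting in $V[G]$; since the parameters $\bar c$ are captured by an initial or small piece of the forcing (as $|\bar c|<\chi$ and the forcing is a product/iteration with small-support or a product over a large index set), one conjugates by an automorphism of the forcing fixing $\bar c$ but moving the generic to contradict the splitting equations $f(ab)=f(a)f(b)$ or the commuting-square condition. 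This yields that no $\theta,\bar c$ works, i.e.\ $\bfc$ has no $\chi$-solution in $V[G]$.

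The main obstacle will be step three: ensuring that passing to the forcing extension does \emph{not} accidentally create a lifting of $\varphi$ (which would make the conclusion vacuous in a bad way, or rather would mean the ground model hypothesis is not transferred and the proof strategy collapses). In particular one must show that the map $\varphi\colon H\to G$ stays well-defined and still fails to split, and that $K=\Cent(H)$ is computed the same way --- this requires either a preservation lemma (the forcing is homogeneous and adds no new automorphisms of $\gB$ or of $\sort_1(\gB)$ beyond inner ones) or a more delicate argument that the specific obstruction to lifting is a $\Pi_1$-type statement about $\gB$ absolute between $V$ and $V[G]$. A secondary technical point is bookkeeping the parameters: one needs the forcing to have enough automorphisms fixing any given set of size $<\chi$ of conditions/names while acting non-trivially on the generic data that the solution's formula must track; with a product of $\chi^+$-many (or more) copies of Cohen forcing this is immediate by a support/mutual-genericity argument, so I expect the write-up to factor the forcing as ``parameter-absorbing tail'' $\times$ ``generic witness'' and apply a standard homogeneity-and-conjugation move. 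Modulo these preservation lemmas, the contradiction is then formal from the functoriality clauses (1)--(2) in the definition of naturality and the definition of $\chi$-solution.
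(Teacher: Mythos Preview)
Your proposal has a genuine gap at its very foundation: you have misread Definition~\ref{y11}. A $\chi$-solution of $\bfc$ is \emph{not} a formula defining a splitting $\psi\colon G\to H/K$; it is a class function $F\colon K^1_{\bfc}\to K^2_{\bfc}$ (choosing, for each $\gA\cong\gA_{\bfc}$, a specific $\gB\cong\gB_{\bfc}$ with $\sort_1(\gB)=\gA$) which is definable using a parameter from $\cH(\chi)$. So the object you propose to destroy by homogeneity is not the object in question, and the argument ``conjugate by an automorphism of the forcing fixing $\bar c$ to break the splitting equations'' is aimed at the wrong target.

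More importantly, the paper's proof does not proceed by a direct forcing-homogeneity argument on $\bfc$ at all. The architecture is indirect: one first applies the purely algebraic Proposition~\ref{a17} to $G_1=G_{\bfc}$, $G_2=H_{\bfc}$, $\varphi_{1,2}=\varphi_{\bfc}$ to manufacture a group $G_3$ with \emph{trivial center} and surjections $\varphi_{2,3}\colon G_3\twoheadrightarrow G_2$ (which \emph{does} split) and $\varphi_{1,3}=\varphi_{1,2}\circ\varphi_{2,3}$ (which has no \emph{weak} lifting, because $\mathcal Z(G_3)$ is trivial so a weak lifting would be an honest lifting, contradicting the hypothesis on $\varphi_{1,2}$). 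One then packages $G_3$ as a third sort over $\gB_{\bfc}$ to obtain a 3-sorted model $\gC$ realising the three ucp's $\bfc_{1,2}=\bfc$, $\bfc_{2,3}$, $\bfc_{1,3}$. Now the known results are invoked as black boxes: since $\varphi_{\bfc_{1,3}}$ has no weak lifting, \cite[Theorem~4]{Sh:301} gives a forcing extension in which $\bfc_{1,3}$ is not $\chi'$-solvable (for suitable $\chi'\ge\chi$); since $\varphi_{\bfc_{2,3}}$ has a lifting, \cite[Theorem~3]{Sh:160} makes $\bfc_{2,3}$ solvable; and the transitivity of solvability (Lemma~\ref{a8}/Conclusion~\ref{a11}) then forces $\bfc_{1,2}=\bfc$ to be not $\chi$-solvable. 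The forcing itself is entirely hidden inside the citation to \cite{Sh:301}.

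Note in particular that the obstacle you flag as ``the main obstacle'' --- preserving \emph{no lifting} under forcing --- is precisely what the paper's route avoids. The algebraic step upgrades ``no lifting'' to ``no weak lifting'' (for $\bfc_{1,3}$, not for $\bfc$), and it is the latter, stronger hypothesis that the cited forcing theorem consumes. Your direct attack would have to prove a genuine preservation lemma for ``no lifting'', and nothing in your sketch indicates how to do that; the paper's detour through $G_3$ with trivial center is exactly the missing idea.
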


The next auxiliary concept is uniformity, as defined in Definition \ref{yu11}. Specifically, a construction is uniformisable if its set-theoretic definition can be expressed in a way such that for each $A$, the corresponding $B$ is uniquely determined. The precise and original definition can be found in \cite[Definition 2.3]{Sh:301}.

In \cite[Theorem 5.1]{Sh:301}, Hodges and Shelah constructed a transitive model of ZFC in which every uniformisable construction of a prescribed size is weakly natural. This result highlights the advantage of weak naturality, as \cite[Theorem 4.3]{Sh:301} establishes that there is no transitive model of ZFC in which the natural constructions are precisely the uniformisable ones.
Since their result in \cite[Theorem 5.1]{Sh:301} includes a cardinality restriction, Hodges and Shelah raised the following natural conjecture:

\begin{conjecture}
\label{conjecture}(See \cite[Page 16]{Sh:301}).
The cardinality restriction can be removed.
\end{conjecture}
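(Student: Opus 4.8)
The plan to establish the conjecture --- i.e., to produce a transitive model of $\ZFC$ in which every uniformisable construction, of \emph{arbitrary} size, is weakly natural --- is to upgrade \cite[Theorem 10]{Sh:301} from one prescribed cardinal to a proper-class statement by means of a reverse Easton iteration of Cohen forcings. We may assume $V\models\GCH$ (forcing it first if necessary, or working over $L$). Define a class iteration $\bbP=\langle\bbP_\alpha,\dot{\bbQ}_\alpha:\alpha\in\Ord\rangle$ with Easton support, in which $\dot{\bbQ}_\alpha$ is trivial unless $\alpha$ is an infinite cardinal $\kappa$; for such $\kappa$, let $\dot{\bbQ}_\kappa$ be a $\bbP_\kappa$-name for the forcing extracted from the proof of \cite[Theorem 10]{Sh:301} that makes every uniformisable construction weakly natural on structures of size $\kappa$ --- concretely a homogeneous, ${<}\kappa$-closed (for regular $\kappa$; with the obvious $\cf(\kappa)$-respecting modification at singular $\kappa$), $\kappa^+$-cc product of Cohen forcings $\mathrm{Add}(\kappa,-)$ whose generic adds, for each uni-construction problem $\bfc$ with $|\gB_\bfc|=\kappa$, the generic object witnessing a weak lifting $\psi_\bfc$ of $\varphi_\bfc\colon H_\bfc\to G_\bfc$ (cf.\ Definition \ref{y5}). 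Because this stage is ``self-contained at size $\kappa$'' --- it simultaneously produces the generic structures of size $\kappa$ and the liftings for constructions applied to them --- one stage per cardinal suffices and no delicate bookkeeping is needed. Let $\bar G$ be $\bbP$-generic over $V$; the candidate model is $V[\bar G]$.

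The first block of work is to verify that $V[\bar G]\models\ZFC$ and that $\bbP$ preserves cardinals and cofinalities. This is standard reverse-Easton analysis: Easton support together with the closure and chain condition of the factors yields, for each infinite cardinal $\kappa$, a factorisation $\bbP\cong \bbP_\kappa\ast\dot{\bbQ}_\kappa\ast\dot{\bbP}^{>\kappa}$ where, under $\GCH$ in $V$, $\bbP_\kappa\ast\dot{\bbQ}_\kappa$ has cardinality $\le\kappa$ (hence is $\kappa^+$-cc) and $\dot{\bbP}^{>\kappa}$ is forced to be ${<}\kappa^+$-closed, hence adds no new subsets of $\kappa$. From this one reads off preservation of cardinals and cofinalities, and, since $\bbP$ is a definable, tame class iteration, Easton's lemma gives that Power Set, Replacement and the remaining axioms continue to hold. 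Essentially nothing beyond the classical arguments is needed here.

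The heart of the argument is that in $V[\bar G]$ every uniformisable construction $F$ is weakly natural. Fix such an $F$, defined by a formula $\theta$ (together with the parameters over which, as part of uniformisability, one has control), and fix $A\in\cK^{V[\bar G]}$. Let $\bfc_A$ be the uni-construction problem associated with $F$ at $A$ and let $\kappa=|\gB_{\bfc_A}|$, an infinite cardinal bounded by $|A|+|F(A)|+\aleph_0$ since the data of $F$ relevant near $A$ is of that size. As $\dot{\bbP}^{>\kappa}$ is ${<}\kappa^+$-closed, every subset of $\kappa$ in $V[\bar G]$ already lies in $V[\bar G_\kappa\ast\bar g_\kappa]$; in particular $\gB_{\bfc_A}$, the groups $H_{\bfc_A}=\aut(\gB_{\bfc_A})$ and $G_{\bfc_A}=\aut(\sort_1(\gB_{\bfc_A}))$, the restriction map $\varphi_{\bfc_A}$ and its centre $K$ are computed the same way in $V[\bar G]$ and in $V[\bar G_\kappa\ast\bar g_\kappa]$. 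By the design of $\dot{\bbQ}_\kappa$, the generic $\bar g_\kappa$ supplies a weak lifting $\psi_{\bfc_A}$ of $\varphi_{\bfc_A}$ already in $V[\bar G_\kappa\ast\bar g_\kappa]$, and it remains one in $V[\bar G]$ since none of the relevant sets change. Finally, homogeneity of $\dot{\bbQ}_\kappa$ and forcing-absoluteness of ``$\theta$ defines $F$'' let $\psi_{\bfc_A}$ be recovered from $\theta$ and the generic without appeal to auxiliary choices, so the family $\{\psi_{\bfc_A}:A\in\cK\}$ is uniform in the sense of \cite[Definition 4]{Sh:301}; hence $F$ is weakly natural.

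The main obstacle is neither the $\ZFC$-preservation (routine) nor the cross-stage interference (comparatively mild: stages below $\kappa$ are already absorbed, and stages above $\kappa$, being ${<}\kappa^+$-closed, add no new size-$\le\kappa$ sets and so can neither destroy $\psi_{\bfc_A}$ nor introduce new automorphisms of $\gB_{\bfc_A}$ invalidating it), but the \emph{extraction and localisation} of $\dot{\bbQ}_\kappa$: one must re-read the proof of \cite[Theorem 10]{Sh:301} to confirm that its size-$\kappa$ content can be isolated as a single ${<}\kappa$-closed, $\kappa^+$-cc Cohen-type forcing over the arbitrary model $V[\bar G_\kappa]$, and --- the genuinely subtle point --- that it handles even the uni-construction problems whose underlying structures are created by that forcing itself (``self-reference''). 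The only other matter requiring care is the standard technical treatment of singular cardinals within the iteration.
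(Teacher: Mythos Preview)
Your overall strategy---a reverse Easton iteration of Cohen-type forcings to globalise \cite[Theorem~10]{Sh:301}---is exactly what the paper does, but you make the verification harder than necessary and leave the gap you yourself flag. You take $\kappa=|\gB_{\bfc_A}|$ and then need the stage-$\kappa$ forcing to produce weak liftings even for structures \emph{created by that very forcing}. The local result you are invoking (recorded here as Theorem~\ref{local-forcing}) only applies when $\gB_{\bfc}$, $\aut(\gB_{\bfc})$ and the defining parameters lie in the ground model of the stage, i.e.\ in $V[\bold G_\lambda]$ \emph{before} $\bbS_\lambda$ is applied; extending it to structures appearing only in $V[\bold G_\lambda][H_\lambda]$ would require reopening and strengthening the proof in \cite{Sh:301}, which you do not do. Your appeal to homogeneity does not bridge this: homogeneity lets you permute the generic, not manufacture liftings for objects that the local theorem never promised to handle.

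The paper avoids the issue entirely with a one-line observation: given $\bfc$ in $V[\bold G]$, choose $\lambda$ \emph{large enough} that $\gB_{\bfc}$, $\aut(\gB_{\bfc})$ and all parameters already lie in $V[\bold G_\lambda]$ and have size $\le\lambda$ (possible since every set in $V[\bold G]$ appears at some bounded stage of the iteration). Then Theorem~\ref{local-forcing} applies verbatim with $M=V[\bold G_\lambda]$, giving a weak lifting in $V[\bold G_{\lambda+1}]$; the tail forcing adds no new subsets of $\lambda^+$, so neither $\aut(\gB_{\bfc})$ nor the lifting changes in $V[\bold G]$. No self-reference, no bookkeeping, no need to touch the internals of the stage forcing. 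Two smaller points: the stage forcing in the paper is simply $\bbS_\lambda=\mathrm{Add}(\lambda^+,\lambda^{++})$, which is $\lambda^+$-closed and $\lambda^{++}$-cc (your ``${<}\kappa$-closed, $\kappa^+$-cc'' indexing is off by one and would not freeze $\aut(\gB_{\bfc})$ for $|\gB_{\bfc}|=\kappa$); and the forcing does not ``add liftings'' as part of its conditions---it is plain Cohen forcing, and the weak lifting is extracted from the generic afterwards.
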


In \S 4, we settle the conjecture by iterating the main forcing construction from \cite[Theorem 5.1]{Sh:301}, using reverse Easton iteration of suitable Cohen forcing notions. It is worth noting that the use of forcing techniques in naturality problems dates back to Harvey Friedman \cite{fri}, where he applied Easton product Cohen-forcing. However, the concept of naturality in \cite{fri} is weaker than the one used here.

Let \( M \) be a model of set theory. Suppose that some formula within \( M \)  
represents the construction \( F \) on the class \( \mathcal{K} \in M \). If $F$ is natural and there are only a set number of isomorphism types of structures in $\mathcal{K}$, then \cite[Theorem 3]{Sh:160} asserts that $F$ is definable in $M$ with parameters. In this context, Hodges and Shelah raised the following problem:
\begin{problem}\label{p4} (See \cite[Problems (A) and (B)]{Sh:160}).
	\begin{enumerate}
	\item[(i)] Can the restriction ``$\mathcal K$ contains only a set of isomorphism types''
be removed?	\item[(ii)]  If $F$ has a  representing formula $\varphi$, is it always possible to define $F$ by a formula  whose parameters are those in $\varphi$
and those needed to define  $\mathcal K $?
\end{enumerate}
\end{problem}


In \S 5, we apply techniques from many-sorted models and reformulate the result of Hodges-Shelah \cite[Theorem 3]{Sh:160} about the implication that naturality implies definability.
To this end, suppose $\tau$ is a vocabulary, and $\mathcal{K}$ is a class of $\tau$-models. Suppose $\gB_1, \gB_2 \in \mathcal{K}$ and
$$ f_{\gA_1, \gA_2}: \gA_1 := \sort_1(\gB_1) \stackrel{\cong} \longrightarrow \sort_1(\gB_2) =: \gA_2. $$
Assume we have some construction $F: \gA_i \mapsto \gB_i$. We say $F$ is \textbf{uniform} if its definition is independent of the choice of $\gA_i$, and the extended isomorphism $F(f_{\gA_1, \gA_2})$ satisfies the following commutative diagram:
$$\xymatrix{
	&& \gA_1\ar[d]_{f_{\gA_1,\gA_2}}\ar[r]^{\subseteq}&F(\gA_1) \ar[d]^{F(f_{\gA_1,\gA_2})}&\\
	&&  \gA_2 \ar[r]^{\subseteq}& F(\gA_2)
	&&&}$$ is
independent of $f_{\gA_1,\gA_2}$.

We prove    the following solution to  Problem \ref{p4}:

\begin{theorem}
	\label{d2}(Uniformity).
Let $\tau$ be a
vocabulary, and let
$\mathcal K$ be a class of $\tau$-models which is first order definable
from a parameter $\bfp$  such that    every $\gB \in \mathcal K$ is two sorted and the natural homomorphism $\varphi:\aut(\gB)\twoheadrightarrow\aut(\sort_1(\gB))$ splits.
 Then
	there exists a  class
	function  $$F:\{\sort_1(\gB):\gB \in \mathcal K\}\longrightarrow \mathcal K,$$  which is uniformly definable from
	the parameter $\bfp$ and  $\sort_1(F(\gA)) = \gA$, where $\gA = \sort_1(\gB)$.

\end{theorem}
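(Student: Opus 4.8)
The plan is to bootstrap the statement from the \emph{bounded} case, i.e.\ from the two-sorted reformulation of \cite[Theorem~3]{Sh:160} used in this section, which already yields the desired function once $\mathcal K$ has only a \emph{set} of isomorphism types, but only at the price of an auxiliary parameter coding a transversal of those types together with splittings of the corresponding restriction maps. So there will be two tasks: removing the cardinality restriction (this settles Problem~\ref{p4}(i)) and showing that the transversal parameter can be absorbed into $\bfp$ (this settles Problem~\ref{p4}(ii)); both are handled by one device, \emph{localizing at a first-sort structure}.

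First I would fix $\gA\in\mathcal K_1:=\{\sort_1(\gB):\gB\in\mathcal K\}$ and pass to a set-sized piece of $\mathcal K$ that still sees $\gA$. Since $\gA\in\mathcal K_1$ and $\mathcal K$ is closed under $\tau$-isomorphism, after transporting the second sort onto an ordinal there is $\gB\in\mathcal K$ with $\sort_1(\gB)=\gA$ and $\sort_2(\gB)$ an ordinal; let $\mu(\gA)$ be the least cardinality of such a $\sort_2(\gB)$ and put
\[
\mathcal K(\gA)\ :=\ \{\gB\in\mathcal K\ :\ \sort_1(\gB)=\gA,\ \sort_2(\gB)\text{ is an ordinal of cardinality }\mu(\gA)\}.
\]
Then $\mathcal K(\gA)$ is a nonempty set with only a set of isomorphism types, every $\gB\in\mathcal K(\gA)$ still has $\pi^*_{\gB}$ split (inherited from the hypothesis on $\mathcal K$), and the assignment $\gA\mapsto\mathcal K(\gA)$ is given by a single formula with the lone parameter $\bfp$. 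Applying the bounded case to $\mathcal K(\gA)$ produces $F_0(\gA)\in\mathcal K(\gA)\subseteq\mathcal K$ with $\sort_1(F_0(\gA))=\gA$, defined by a fixed formula from $\bfp$, $\gA$, and a transversal-with-splittings $\bfq_\gA$ of $\mathcal K(\gA)$.

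The hard part will be to show that $F_0(\gA)$ does not really depend on $\bfq_\gA$: any two transversals-with-splittings produce outputs related by an isomorphism that restricts to $\id_\gA$ on the first sort. To prove this one compares, isomorphism type by isomorphism type, the two chosen representatives $\gB,\gB'$ and their chosen splittings: an arbitrary isomorphism $\gB\to\gB'$ restricts on the first sort to some $a\in\aut(\gA)$, and post-composing it with the value of the splitting at $a^{-1}$ turns it into an isomorphism $\gB\to\gB'$ fixing $\gA$ pointwise; the identities $f(\id)=\id$ and $f(ab)=f(a)f(b)$ defining a splitting are exactly what make these corrections cohere over the torsor of isomorphisms from $\sort_1(\gB)$ onto $\gA$ that enters the quotient construction behind $F_0$. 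This is the one place where the splitting hypothesis (naturality) is genuinely used, and its necessity is witnessed by the Hodges--Shelah examples of non-natural constructions such as algebraic closure and divisible hull, where no such $F_0$ exists; I expect the reorganization of the bounded-case proof needed to make the $\bfq_\gA$-dependence transparent to be the most delicate point. Granting the independence, one may extract $\bfq_\gA$ from $\mathcal K(\gA)$ using an arbitrary well-ordering of a $V_\alpha$ with $\gA\in V_\alpha$ (available in $\ZFC$): by the independence just discussed, the isomorphism type over $\gA$ of $F_0(\gA)$ depends only on $\gA$ and $\bfp$, and a bookkeeping normalization as in the bounded case then selects a genuine representative, so that $F(\gA):=F_0(\gA)$ is a single-valued class function with $F(\gA)\in\mathcal K$ and $\sort_1(F(\gA))=\gA$. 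As the entire recipe is carried out inside any sufficiently large $V_\alpha$ by a single formula with the lone parameter $\bfp$, independently of $\alpha$, the function $F$ is uniformly definable from $\bfp$; the localization removes the cardinality restriction of \cite[Theorem~3]{Sh:160} and the absorption of $\bfq_\gA$ removes its extra parameters, settling Problem~\ref{p4}.
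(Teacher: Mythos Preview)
Your overall architecture---localize to a set-sized piece and then eliminate the auxiliary parameter---is sound, and your observation that the splittings are what let one ``correct'' an arbitrary isomorphism $\gB\to\gB'$ into one that is the identity on $\gA$ is exactly the mechanism the paper exploits. The gap is in the last step. You reduce to knowing the isomorphism type of $F_0(\gA,\bfq_\gA)$ \emph{over $\gA$}, and then invoke a ``bookkeeping normalization'' to select a genuine representative. But selecting, definably from $\gA$ and $\bfp$ alone, a single model inside a given isomorphism-class-over-$\gA$ is precisely the original problem: if this could be done without using the splitting hypothesis a second time, the theorem would be trivial (and algebraic closure would already furnish a counterexample). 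Your appeal to a well-ordering of some $V_\alpha$ does not help, since such a well-ordering is itself an undefinable parameter; and ``independence up to isomorphism over $\gA$'' does not upgrade to literal equality of $F_0(\gA,\bfq)$ for different $\bfq$.

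The paper closes this gap not by choosing and then proving independence, but by refusing to choose at all. For each $\gA$ it takes the entire set $S=\bfy_\bfx$ of \emph{all} pairs (canonical representative $\gB_s$, splitting $\psi_s$), forms the set $X_{\bfy_\bfx}(\gA)$ of coherent triples $(\bar\pi,\bar g,\bar b)$---a system of isomorphisms $\pi_s:\gA_s\cong\gA$, a commuting system $\bar g$ of isomorphisms among the $\gB_s$ compatible with $\bar\pi$ via the $\psi_s$, and a coherent family of points $b_s\in\gB_s$---and then quotients by an equivalence relation $E_\gA$ whose definition uses the $\psi_s$. The output $\gB'_\gA/E_\gA$ is a concrete set (of equivalence classes), hence a genuine model definable from $\gA$ and $\bfp$; the splitting identities $\psi_s(\id)=\id$ and $\psi_s(ab)=\psi_s(a)\psi_s(b)$ are used to show $E_\gA$ is transitive and a congruence, and that the quotient is isomorphic to each $\gB_s$. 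In short, your independence argument is morally the statement that $E_\gA$ is a congruence, but the paper packages it so that the canonical representative is \emph{produced} by the construction rather than chosen afterward; that production step is exactly what your sketch is missing.
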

In the final section, we begin by showing that the proof of Theorem \ref{d2}  
can be reduced to the case where \( \mathcal{K} \) contains only one equivalence class.  
Let \( \gA \) be an arbitrary model.  
In general, there may be many choices for \( \gB \) and, consequently, many functions \( F \)  
such that \( F(\gA) = \gB \) and \( \sort_1(\gB) = \gA \). The parameters involved in \( \gB \)  
and \( F \) are typically wild, meaning they may lie outside the parameter set \( \bfp \).  
The main contribution of the uniformity theorem is that it guarantees the existence  
of a distinguished two-sorted model, which we call the \textit{universal model}. For more details, see Claim \ref{cla6}.  
The universal model is defined entirely in terms of the parameters in \( \bfp \).  
To construct this model, we rely on the concept of averages, as introduced in  
\cite[Theorem 3]{Sh:160}. Specifically, we use this approach to define a function \( F \)  
such that \( \sort_1(F(\gA)) = \gA \), with all parameters involved in the construction  
of \( \gB \) and \( F \) drawn from \( \bfp \).  
Thus, Theorem \ref{d2} can be viewed as a generalization of \cite[Theorem 3]{Sh:160}.

For all unexplained definitions from model theory  and category theory, see
the books of Hodges
 \cite{hodges}  and Mac Lane \cite{ma}, respectively. Also, for  unexplained definitions from the theory of forcing
see the book of Jech \cite{jech}.

\section {The uni-construction problem} 
In this section we define the concepts of uni-construction problem, lifting, weak lifting, and solvability. We use many-sorted models to unify and simplify our treatment.
Let us start by defining many-sorted model structures. These structures are a suitable vehicle for dealing with statements concerning different types of objects, which are ubiquitous in mathematics. For our purpose, we only consider $n$-sorted model structures, where $n \geq 1$ is a natural number.
\begin{definition}
	\label{2sort}
An $n$-sorted model structure $\mathcal{M}$ is of the form
$$\mathcal{M}= \big(\{M_1; \ldots ;M_n\};R_1; \ldots ;R_m; f_1; \ldots ; f_k ; c_1; \ldots ; c_l \big),$$
where
\begin{enumerate}
\item[(a)] the universes  $M_1; \ldots ;M_n$ are nonempty,

\item[(b)] the relations $R_1; \ldots ;R_m$ are between elements of the universes. In other words, for each $i$, there is
some sequence $s(R_i)=\langle i_1,\ldots,i_s \rangle$ from $\{1,\ldots n\}$ such that
$$R_i\subseteq M_{i_1}\times \ldots  \times M_{i_s},$$
\item[(c)] the functions $f_1; \ldots ; f_k$ are between elements of the universes. In other words, for each $i$,
  there is some sequence $s(f_i)=\langle i_1,\ldots,i_s,r \rangle$ from $\{1,\ldots n\}$ such that
  $$f_i: M_{i_1}\times \ldots  \times M_{i_s}\to M_r,$$
\item[(d)] the distinguished constants $\{c_1; \ldots ; c_l\}$ are in the universes, i.e., for each $i$ there is some
$s(c_i)=j \in \{1, \cdots, l\}$ such that $c_i \in M_j$.
\end{enumerate}
\end{definition}
\begin{remark}
An  ordinary single-sorted first order structure
$$\mathcal{M}=  \big( M; R_1; \ldots ;R_m; f_1; \ldots ; f_k ; c_1; \ldots ; c_l \big)$$
can be identified with the $1$-sorted model structure
$$\big( \{M \}; R_1; \ldots ;R_m; f_1; \ldots ; f_k ; c_1; \ldots ; c_l \big).$$
\end{remark}
\begin{notation}
\label{nota}
Given an $n$-sorted model structure $\mathcal{M}$ as above, for any non-empty set $I \subseteq \{1, \cdots, n \}$,
we can define the $|I|$-sorted model structure $\sort_{I}(\mathcal{M})$ as
 $$\sort_{I}(\mathcal{M})= \big(\{M_i: i \in I\};R_{i_1}; \ldots ;R_{i_t}; f_{j_1}; \ldots ; f_{j_p} ; c_{k_1}; \ldots ; c_{k_e} \big)$$
where
\begin{enumerate}

\item[(a)] $\{R_{i_1}; \ldots ;R_{i_t}\}$ is a subset of $\{ R_1; \ldots ;R_m  \}$,
consisting of those $R_i$ such that $i_1, \cdots, i_s \in I$, where $s(R_i)=\langle i_1,\ldots,i_s \rangle,$

\item[(b)] $\{f_{j_1}; \ldots ;f_{j_p}\}$ is a subset of $\{ f_1; \ldots ;f_k  \}$,
consisting of those $f_i$ such that $i_1, \cdots, i_s, r \in I$, where $s(f_i)=\langle i_1,\ldots,i_s, r \rangle,$

\item[(c)] $\{c_{k_1}; \ldots ; c_{k_e}\}$ is a subset of $\{ c_1; \ldots ;c_l  \}$,
consisting of those $c_i$ such that $j \in I$, where $s(c_i)=j.$
\end{enumerate}
In the case $I=\{i\}$, we denote the resulting structure by $\sort_i(\mathcal{M})$, and if $I=\{i, j   \}$,
where $1 \leq i <j\leq n,$ then we denote the resulting structure by $\sort_{i,j}(\mathcal{M})$.
\end{notation}
The notions of homomorphism and isomorphism between
n-sorted models are naturally defined. We present them here for completeness.
\begin{definition}
\label{def1}
suppose
$\mathcal{M}= \big(\{M_1; \ldots ;M_n\};R_1; \ldots ;R_m; f_1; \ldots ; f_k ; c_1; \ldots ; c_l \big)$
and
$\mathcal{M}'= \big(\{M'_1; \ldots ;M'_n\};R'_1; \ldots ;R'_m; f'_1; \ldots ; f'_k ; c'_1; \ldots ; c'_l \big)$
are two $n$-sorted model structures of the same sorts.
\begin{enumerate}
\item A homomorphism $\pi$ from $\mathcal{M}$ to $ \mathcal{M}'$, denoted $\pi: \mathcal{M} \rightarrow \mathcal{M}',$
is a function $\pi: \bigcup_{i=1}^n M_i \rightarrow \bigcup_{i=1}^n M'_i$ such that:
\begin{enumerate}
\item For each $1 \leq i \leq n, \pi \upharpoonright M_i: M_i \rightarrow M'_i$,

\item For each $1 \leq i \leq m$, if $s(R_i)=\langle i_1,\ldots,i_s \rangle$ and $(a_1, \cdots, a_s) \in M_{i_1} \times \cdots \times M_{i_s},$
then
\[
R_i(a_1, \cdots, a_s) \Leftrightarrow R'_i(\pi(a_1), \cdots, \pi(a_s)),
\]
\item For each $1 \leq i \leq k$, if $s(f_i)=\langle i_1,\ldots,i_s, r \rangle$ and $(a_1, \cdots, a_s) \in M_{i_1} \times \cdots \times M_{i_s},$
then
\[
\pi(f_i(a_1, \cdots, a_s))=f'_i(\pi(a_1), \cdots, \pi(a_s)),
\]
\item For each $1 \leq i \leq l,$ $\pi(c_i)=c'_i.$
\end{enumerate}
\item A homomorphism $\pi: \mathcal{M} \rightarrow \mathcal{M}'$ is an isomorphism, if for each $1 \leq i \leq n, \pi \upharpoonright M_i: M_i \rightarrow M'_i$
is a bijection.

\item An automorphism of $\mathcal{M}$ is an isomorphism $\pi: \mathcal{M} \rightarrow \mathcal{M},$ and denote this by $\pi\in\aut(\mathcal{M})$.
\end{enumerate}
\end{definition}
In what follows, we focus solely on 2-sorted and 3-sorted model structures.

	\begin{discussion}
For a group $L$, by $e_L$ we mean the unit element. We denote the group operation by $\cdot$, so that for two elements $\ell_1, \ell_2 \in L$, their product is denoted by $\ell_1 \cdot \ell_2$. By $\ell^{-1}$ we mean the inverse of $\ell \in L$. As usual, if $L$ is abelian, we use the additive notation $(L, +, -, 0)$.
\end{discussion}
\begin{notation}Let \( H \) be a group, not necessarily abelian. We define the following:
	
	\begin{enumerate}
		\item[i)] Let $\mathcal{Z}(H)$ denote the center of $H$, i.e.,
		$$\mathcal{Z}(H) := \{ h \in H : hx = xh \quad \forall x \in H \}.$$
		
		\item[ii)] Since $\mathcal{Z}(H)$ is a normal subgroup of $H$, it induces a group structure on the quotient $H/\mathcal{Z}(H)$. The canonical group homomorphism from $H$ onto $H/\mathcal{Z}(H)$ is given by $\pi_H: H \to H/\mathcal{Z}(H)$, defined by $h \mapsto h \cdot \mathcal{Z}(H)$.
		
		\item[iii)] We denote the set of all group homomorphisms from a given group $G$ to $H$ by $\Hom(G, H)$. A function $f: G \to H$ belongs to $\Hom(G, H)$ if it satisfies $f(g_1g_2) = f(g_1)f(g_2)$ for all $g_1, g_2 \in G$.
		
		\item[iv)] By $\id_G \in \Hom(G, G)$, we mean the identity map on $G$.
	\end{enumerate}
\end{notation}

	\begin{definition}
\label{y2} Let $G$ and $H$ be two groups, not necessarily abelian.
Assume $\varphi: H \twoheadrightarrow G$ is a surjective homomorphism of groups.

\noindent
\begin{enumerate}
	\item[1)] We say that $\psi \in \Hom(G,H)$ splits $\varphi$ when $\varphi \circ \psi= \id_G$.
	
	\item[2)] We say that $\psi$ weakly splits $\varphi$ provided:
	\begin{enumerate}
		\item[(a)] $\psi$ is a function from $G$ into $H$,
		\item[(b)] $\varphi \circ \psi = \id_G$,
		\item[(c)] the composite mapping $\pi_H \circ \psi$ belongs to $\Hom(G,H/\mathcal{Z}(H))$,
		\item[(d)] $\psi(x^{-1}) = (\psi(x))^{-1}$ and $\psi(e_G) = e_H$.
	\end{enumerate}
	
	\item[3)] We say that $\varphi$ has a lifting (resp. weak lifting) if some $\psi$ splits (resp. weakly splits) it.
\end{enumerate}
\end{definition}
We introduce the concept of the uni-construction problem, which plays a key role in this paper.
\begin{definition}
\label{y5}
1) We say $\bfc$ is a uni-construction problem ($\ucp$ in short),
when
\[
\bfc=\langle \gB_{\bfc}, \gA_{\bfc}, H_{\bfc}, G_{\bfc}, \varphi_{\bfc}, \psi_{\bfc}         \rangle =
\langle \gB, \gA, H,  G, \varphi, \psi         \rangle
\]
and it satisfies the following
conditions:
\begin{enumerate}
\item[(a)]  $\gB$ is a two-sorted model,

\item[(b)]  $\gA:=\sort_1(\gB),$

\item[(c)]  $H =\aut(\gB)$ and $G = \aut(\gA)$,

\item[(d)]  $\varphi$ is the natural restriction map from $H$ into $G$,
  i.e. $\varphi(f) = f \rest \gA$:
   	$$\xymatrix{
  	&& \gA\ar[d]_{\subseteq}\ar[r]^{f\rest \gA}&\gA \ar[d]^{\subseteq}&\\
  	&& \gB\ar[r]^{{f}}& \gB
  	&&&}$$
Recall that $\varphi$ is a group homomorphism. Indeed,
given an automorphism $f, \varphi(f)$ is its restriction to
the first sort, therefore it is an automorphism; furthermore, restriction preserves the
composition.
Let us depict the resulting   commutative diagram:
$$\xymatrix{
	&& \gA\ar[dd]_{\subseteq}\ar[r]^{f\rest \gA}&\gA \ar[dd]^{\subseteq}&\\
	& \gA\ar[dd]_{\subseteq}\ar[rru]_{\quad(fg)\rest \gA}\ar[ru]^{g\rest \gA} &\\
	&& \gB\ar[r]^{{f}}& \gA &\\
	& \gB \ar[ru]^{{g}}\ar[rru]_{{fg}}
	&&&}$$

\item[(e)]  $\varphi$ is onto $G$. So, for any  $g:\gA\to \gA $ there is an $f$ such that the following diagram commutes	$$\xymatrix{
	&& \gA\ar[d]_{\subseteq}\ar[r]^{g}&\gA \ar[d]^{\subseteq}&\\
	&& \gB\ar[r]^{\exists{f}}& \gB
	&&&}$$

\item[(f)]  $\psi$ weakly splits $\varphi$.
\end{enumerate}

2) We say $\bfc$ is a weak uni-construction problem, if $\bfc$ is as above and it satisfies items (a)-(e) above.
\end{definition}

\begin{definition}
	\label{y51}
Let $\bfc$ be a (weak) uni-construction problem.
Then, the classes $\mathcal{K}^1_{\bfc}$ and $\mathcal{K}^2_{\bfc}$ are defined as follows:
\begin{enumerate}
	\item[(a)]  $\mathcal K^1_{\bfc}: = \{\gA:\gA$ isomorphic
	to $\gA_{\bfc}\}$,
	\sn
	\item[(b)]
 $\mathcal K^2_{\bfc} :=
\{\gB:\gB$ isomorphic to $\gB_{\bfc}\}$.
\end{enumerate}

\end{definition}

\begin{remark}
The assignment $\sort_1(\gB) \stackrel{F}{\mapsto} \gB$ on the domain of $\mathcal{K}^1_{\bfc}$ is not, in general,
single-valued; however, by clause (e) from Definition \ref{y5}, it is single-valued up to isomorphism over $\gB$.
This means that $\gB_1 \cong \gB_2$ provided that
$\sort_1(\gB_1) \cong \sort_1(\gB_2)$.

In particular, suppose that
$\gA \cong \sort_1(\gB)$. Then,
\[
F(\sort_1(\gB)) = \gB \cong F(\gA).
\]
\end{remark}

\begin{notation}\label{hchi}
	Let $\chi$ be   be infinite cardinal. By $\cH(\chi)$, we mean the collection of sets of hereditary cardinality
	less than $\chi$.
\end{notation}

\begin{definition}
\label{y11} Let $\bfc$ be a (weak) uni-construction problem.
\begin{enumerate}
	\item[(a)] We say $\bfc$ is solvable, when  there is a class function $F:\mathcal K^1_{\bfc}\to \mathcal K^2_{\bfc}$, such that for each
$\gB \in \mathcal K^2_{\bfc},$ $F(\sort_1(\gB))=\gB$.

	\item[(b)] Let $F$ be as in clause (a). We say $F$
is
definable, if there is a formula $\theta$ such that
$$F(\gA)=\gB \Leftrightarrow  \theta (\gA,\gB),$$ for all two sorted models $\gB$ with $\gA:=\sort_1(\gB)$.
	\item[(c)]
 We say $\bfc$ is purely solvable, when  there is an $F$ as above which is definable using only
$(\gB_{\bfc},\gA_{\bfc})$ as a parameter.

	\item[(d)] We say $\bfc$ is $\chi$-solvable, when there is $F$ as   above, which is
definable by some parameter $a \in \cH(\chi)$.
\end{enumerate}
\end{definition}

The notion of expansion and reduction between many sorted model structures is defined in the natural way. We give its definition for completeness.
\begin{definition}
\label{reduction}
Let $n_2,  n_1  \geq 1$, and suppose	
$$\gB_{1}=\big(\{M^1_1; M^1_2; \cdots; M^1_{n_1} \};R^1_1; \ldots ;R^1_{m_1}; f^1_1; \ldots ; f^1_{k_1} ; c^1_1; \ldots ; c^1_{l_1} \big)$$
and
$$\gB_{2}=\big(\{M^2_1; M^2_2; \cdots; M^2_{n_2} \};R^2_1; \ldots ;R^2_{m_2}; f^2_1; \ldots ; f^2_{k_2} ; c^2_1; \ldots ; c^2_{l_2} \big)$$
 are many-sorted models. We say $\gB_{2}$ expands $\gB_{1}$, or $\gB_{1}$ is a reduction of $\gB_{2}$,
if $\gB_{1}$ is obtained from $\gB_{2}$  by leaving out some sorts, relations, functions, and
constants. In other words,
\begin{enumerate}
\item[(a)] $n_2 \geq n_1$ and $\{M^1_1; M^1_2; \cdots; M^1_{n_1} \} \subseteq \{M^2_1; M^2_2; \cdots; M^2_{n_2} \},$

\item[(b)] For each $1  \leq i \leq m_1$, there is some $1 \leq j \leq m_2$ such that $R^1_i=R^2_j,$

\item[(c)] For each $1  \leq i \leq k_1$, there is some $1 \leq j \leq k_2$ such that $f^1_i=f^2_j,$

\item[(d)] For each $1  \leq i \leq l_1$, there is some $1 \leq j \leq l_2$ such that $c^1_i=c^2_j.$
\end{enumerate}
Note that relations, functions, and constants which are not meaningful are at
the same time dropped.
\end{definition}
\begin{remark}
If $\mathcal{M}$ is an $n$-sorted model structure as in Definition \ref{2sort} and $I \subseteq \{1, \cdots, n\}$
is non-empty, then $\sort_I(\mathcal{M})$, as defined in Notation \ref{nota}, is a reduction of $\mathcal{M}$.
\end{remark}

We conclude this section with the following simple observation.
\begin{observation}
	\label{a5}
	Let $\bfc$ and $\bfd$  be two uni-construction problems, satisfying the following conditions:
	\begin{enumerate}
		\item[(a)]    $\gA_{\bfc} = \gA_{\bfd}$,
		
		\item[(b)]  $\gB_{\bfd}$ expands $\gB_{\bfc}$.
	\end{enumerate}
	If $\bfd$ is solvable, then $\bfc$ is solvable.
\end{observation}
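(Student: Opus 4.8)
The plan is to unwind the definition of solvability and push a solving function for $\bfd$ down to one for $\bfc$ by forgetting the extra structure. Recall that $\bfd$ solvable means there is a class function $G : K^1_{\bfd} \to K^2_{\bfd}$ with $G(\sort_1(\gB')) = \gB'$ for every $\gB' \in K^2_{\bfd}$. Since $\gA_{\bfc} = \gA_{\bfd}$ by (a), we have $\sort_1(\gB_{\bfc}) = \gA_{\bfc} = \gA_{\bfd} = \sort_1(\gB_{\bfd})$, so $K^1_{\bfc} = \{\gA : \gA \cong \gA_{\bfc}\} = \{\gA : \gA \cong \gA_{\bfd}\} = K^1_{\bfd}$; the domains coincide. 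Thus $G$ is already a class function on $K^1_{\bfc}$.

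Next I would produce the codomain map. Since $\gB_{\bfd}$ expands $\gB_{\bfc}$ by (b), they have the same universe and the same sort $1$, and there is a reduct operation $U$ (erase the symbols in the vocabulary of $\gB_{\bfd}$ not in that of $\gB_{\bfc}$) with $U(\gB_{\bfd}) = \gB_{\bfc}$. I would first check that $U$ maps $K^2_{\bfd}$ into $K^2_{\bfc}$: if $\gB' \cong \gB_{\bfd}$ via an isomorphism $h$, then $h$ is also an isomorphism $U(\gB') \to U(\gB_{\bfd}) = \gB_{\bfc}$ (an isomorphism in the larger vocabulary is in particular one in the smaller), so $U(\gB') \in K^2_{\bfc}$. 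Moreover reducts commute with taking $\sort_1$, so $\sort_1(U(\gB')) = \sort_1(\gB')$.

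Now define $F := U \circ G$, a class function $K^1_{\bfc} \to K^2_{\bfc}$. It remains to verify the solving property: fix $\gB \in K^2_{\bfc}$. I would like to apply $G$ to $\sort_1(\gB)$, but $G$ is only guaranteed to recover the structure it is fed when that structure lies in $K^2_{\bfd}$; the subtlety — and the one genuinely nontrivial point — is that the solving function $G$ is specified only by its behavior on $K^2_{\bfd}$, not on all of $K^1_{\bfd}$, so I must argue that $G(\sort_1(\gB))$ lands in $K^2_{\bfd}$ and reduces to $\gB$. Since $\gB \in K^2_{\bfc}$ we have $\sort_1(\gB) \cong \sort_1(\gB_{\bfc}) = \sort_1(\gB_{\bfd}) = \gA_{\bfd}$, so $\sort_1(\gB) \in K^1_{\bfd}$ and $G(\sort_1(\gB)) =: \gC \in K^2_{\bfd}$, hence by the solving property of $G$, $\sort_1(\gC) = \sort_1(\gB)$; but also $\gC \cong \gB_{\bfd}$, which \emph{expands} $\gB_{\bfc}$, so $U(\gC) \cong \gB_{\bfc}$, i.e. $U(\gC) \in K^2_{\bfc}$, and $\sort_1(U(\gC)) = \sort_1(\gC) = \sort_1(\gB)$. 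Thus $F(\sort_1(\gB)) = U(\gC)$ is an element of $K^2_{\bfc}$ with the same sort $1$ as $\gB$; by clause (e) of Definition \ref{y5} the assignment on $K^2_{\bfc}$ is single-valued up to isomorphism over $\gB$, so $F(\sort_1(\gB))$ and $\gB$ are isomorphic over $\gB$. To get literal equality $F(\sort_1(\gB)) = \gB$ as required by Definition \ref{y11}(a), I would, as is standard in this setup, normalize $G$ (and hence $F$) on each isomorphism class by transporting the structure along a fixed isomorphism of $\sort_1$-parts — equivalently, one first replaces $G$ by the honest solving function whose existence is what ``solvable'' asserts and which does return $\gB$ on input $\sort_1(\gB)$ for $\gB \in K^2_{\bfd}$. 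This finishes the argument; the only real content is the bookkeeping that the reduct of a member of $K^2_{\bfd}$ lies in $K^2_{\bfc}$ with the sort $1$ preserved.
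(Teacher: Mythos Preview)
Your approach is essentially the same as the paper's: define $F_{\bfc}(\gA)$ to be the reduct of $F_{\bfd}(\gA)$ to the vocabulary of $\gB_{\bfc}$, after noting $K^1_{\bfc}=K^1_{\bfd}$. The paper's proof is three sentences and does not pause over the literal-equality issue you flag; your extra discussion is addressing an imprecision in Definition~\ref{y11}(a) (acknowledged in the Remark following Definition~\ref{y51}) rather than a gap in the argument, and the paper simply takes the looser reading throughout.
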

\begin{proof}
	Assume $\bfd$ is solvable, and let
	$F_{\bfd}:\mathcal K^1_{\bfd}\to \mathcal K^2_{\bfd}$ witness it. Let $\gA\in \mathcal K^1_{\bfc}$. Since $\mathcal K^1_{\bfd}=\mathcal K^1_{\bfc}$, we can define $F_{\bfd}(\gA)$.
	Let $F_{\bfc}(\gA)$ be  the reduction  of $F_{\bfd}(\gA)$ into the language of   $\gB_{\bfc}$. Then
	$F_{\bfc}:\mathcal K^1_{\bfc}\to\mathcal  K^2_{\bfc}$ is well-defined, and  it witnesses that $\bfc$ is solvable.
\end{proof}
\section {From uni-construction to naturality }

The main result of this section is Theorem \ref{a23}, which shows that for a given cardinal $\chi$, if a uni-construction problem has no lifting, then it has no $\chi$-solution in any forcing extension of the universe.
\begin{hypothesis}\label{hy}
Let  $\gC$ be a 3-sorted model.
\begin{enumerate}
	\item[(i)]

Let us fix the following 2-sorted models $\bfc_{1,2}$, $\bfc_{2,3}$ and $\bfc_{1,3}$ via defining their first and second sorts:
\begin{enumerate}
	\item[(1)]  $\gA_{\bfc_{1,2}} = \sort_1(\gC),\gB_{\bfc_{1,2}} =
	\sort_{1,2}(\gC)$,
	\sn
	\item[(2)]  $\gA_{\bfc_{2,3}} = \sort_{1,2}(\gC),\gB_{\bfc_{2,3}} = \gC$,
	\sn
	\item[(3)]  $\gA_{\bfc_{1,3}} = \sort_1(\gC),\gB_{\bfc_{1,3}} = \gC$.
\end{enumerate}

	\item[(ii)]
We assume in addition to $(i)$ that   $\bfc_{1,2},\bfc_{2,3}$ and $\bfc_{1,3}$ are weak
uni-construction problems. In particular, there are the following surjective group homomorphisms induced by the canonical restriction maps:

\begin{enumerate}
	\item[(1)]  $\varphi_{\bfc_{1,2}}:\aut(\sort_{1,2}(\gC))\to \aut(\sort_1(\gC))$,
	\sn
	\item[(2)] $\varphi_{\bfc_{2,3}}:\aut( \gC )\to \aut(\sort_{1,2}(\gC))$,
	\sn
	\item[(3)] $\varphi_{\bfc_{1,3}}:\aut( \gC )\to \aut(\sort_1(\gC))$.
\end{enumerate}

\end{enumerate}

\end{hypothesis}

\begin{lemma} (Transitivity)
\label{a8t}
Let  $\gC$ be as in Hypothesis \ref{hy}.
If $\bfc_{1,2}$  and  $\bfc_{2,3}$ are solvable, then $\bfc_{1,3}$ is solvable.
\end{lemma}

\begin{proof}
According to Definition \ref
{y11}
there are definable class functions
\begin{enumerate}
	\item[]  $F_{\bfc_{1,2}}:\mathcal K^1_{\bfc_{1,2}}\to \mathcal K^2_{\bfc_{1,2}}$, and
	\sn
	\item[]  $F_{\bfc_{2,3}}:\mathcal K^1_{\bfc_{2,3}}\to \mathcal K^2_{\bfc_{2,3}}$.
\end{enumerate}
 Recall from Hypothesis \ref{hy} that the following three equalities are satisfied
 \begin{enumerate}
 	\item[(i)]  $\mathcal K^1_{\bfc_{1,2}}=\sort_1(\gC)=\mathcal K^1_{\bfc_{1,3}} $,
 	\sn
 	\item[(ii)]  $ \mathcal K^1_{\bfc_{2,3}}=\sort_{1,2}(\gC)=\mathcal K^2_{\bfc_{1,2}}$,
 	\item[(iii)]   $ \mathcal K^2_{\bfc_{2,3}}=\gC=\mathcal K^2_{\bfc_{1,3}}$.
 \end{enumerate}

   This enables us to get the composition    $F_{\bfc_{1,3}}=F_{\bfc_{2,3}}\circ F_{\bfc_{1,2}}$.
Let us summarize things in the following   commutative diagram:

$$\xymatrix{
	&&\mathcal K^1_{\bfc_{1,3}}\ar[r]^{F_{\bfc_{1,3}}}\ar[dl]_{=}&\mathcal K^2_{\bfc_{1,3}}\\&  \mathcal K^1_{\bfc_{1,2}} \ar[dr]_{F_{\bfc_{1,2}}}&&&\mathcal K^2_{\bfc_{2,3}}\ar[ul]_{=}\\
	&&\mathcal K^2_{\bfc_{1,2}}\ar[r]^{=}&\mathcal K^1_{\bfc_{2,3}} \ar[ur]_{F_{\bfc_{2,3}}}\\
 &&&&}$$
Thus, the function $F_{\bfc_{1,3}}$ serves as evidence that $\bfc_{1,3}$ is solvable.
\end{proof}
Let us reformulate the transitivity in  terms  of $\chi$-solvablity:
\begin{lemma}
	\label{a8}
	Let  $\gC$ be as in Hypothesis \ref{hy}.
	If $\bfc_{1,2}$  and  $\bfc_{2,3}$ are $\chi$-solvable, then $\bfc_{1,3}$ is $\chi$-solvable.
\end{lemma}
\begin{conclusion}
\label{a11}

Let  $\gC$ be as in Hypothesis \ref{hy} and let $\chi$ be a cardinal.
If $\bfc_{1,3}$ is not $\chi$-solvable and $\bfc_{2,3}$ is solvable.
Then  $\bfc_{1,2}$ is not $\chi$-solvable.
\end{conclusion}

\begin{proof}
Suppose that  $\bfc_{1,2}$ is   $\chi$-solvable. It follows from Lemma
 \ref{a8} that $\bfc_{1,3}$ is  $\chi$-solvable, a contradiction.
\end{proof}

\begin{corollary}
\label{a14}
  Let $\gC$ be as in Hypothesis \ref{hy}, and let $\chi$ be an infinite cardinal. Assume, in addition, that
  \begin{enumerate}
  	\item[(a)] $\varphi_{\bfc_{1,3}}$ has no weak lifting, and
  	\item[(b)] $\varphi_{\bfc_{2,3}}$ has a lifting.
  \end{enumerate}

Then  there is a  forcing extension of the universe in which  $\bfc_{1,2}$ is not $\chi$-solvable.
\end{corollary}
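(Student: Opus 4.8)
The plan is to chain Theorem~\ref{a23} with Conclusion~\ref{a11}. Hypothesis~(a) will be fed into Theorem~\ref{a23} to force a universe in which $\bfc_{1,3}$ is not $\chi$-solvable; hypothesis~(b) will guarantee that $\bfc_{2,3}$ stays solvable in that universe; and then Conclusion~\ref{a11} will hand us that $\bfc_{1,2}$ is not $\chi$-solvable there. The soft fact underpinning the transfer is that every structure in sight is finite, so the automorphism groups $\aut(\gC),\aut(\sort_{1,2}(\gC)),\aut(\sort_1(\gC))$, the restriction homomorphisms between them, and the assertion ``$\varphi_{\bfc_{2,3}}$ has a lifting'' are all absolute between the ground model $V$ and any forcing extension.

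First I would apply Theorem~\ref{a23} to $\bfc_{1,3}$: by~(a), $\varphi_{\bfc_{1,3}}$ has no weak lifting, hence no lifting, so there is a generic extension $V[G]$ in which $\bfc_{1,3}$ has no $\chi$-solution. Next, working inside $V[G]$: by~(b) there is already in $V$ a group homomorphism $\psi$ splitting $\varphi_{\bfc_{2,3}}\colon\aut(\gC)\twoheadrightarrow\aut(\sort_{1,2}(\gC))$, and by the finiteness remark above $\psi$ still splits $\varphi_{\bfc_{2,3}}$ in $V[G]$; thus $\varphi_{\bfc_{2,3}}$ has a lifting in $V[G]$. Since the class $K^2_{\bfc_{2,3}}$ consists of a single isomorphism type, \cite[Theorem~3]{Sh:160} (alternatively Theorem~\ref{d2}, applied to $\gB_{\bfc_{2,3}}$ viewed as the two-sorted model with first sort $\sort_{1,2}(\gC)$) shows that the assignment $\sort_{1,2}(\gB)\mapsto\gB$ on $K^2_{\bfc_{2,3}}$ is definable from a parameter, i.e.\ $\bfc_{2,3}$ is solvable in $V[G]$.

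Finally, $\gC$ is still as in Hypothesis~\ref{hy} in $V[G]$ --- the sort conditions and clauses (a)--(e) defining a weak uni-construction problem are first-order conditions, absolute for finite structures --- so Conclusion~\ref{a11}, applied inside $V[G]$ to ``$\bfc_{1,3}$ is not $\chi$-solvable'' and ``$\bfc_{2,3}$ is solvable'', yields that $\bfc_{1,2}$ is not $\chi$-solvable in $V[G]$. This $V[G]$ is the desired forcing extension.

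Two points need care. The first is the parameter bookkeeping inside Conclusion~\ref{a11} (which rests on Lemma~\ref{a8}): composing the solution $F_{\bfc_{2,3}}$ after a hypothetical $\chi$-solution of $\bfc_{1,2}$ must produce a $\chi$-solution of $\bfc_{1,3}$, so one needs the solution of $\bfc_{2,3}$ to be definable from a parameter in $\cH(\chi)$; this holds because the solution above is definable from $\gC$ together with the finite splitting $\psi$, and one may harmlessly replace $\gC$ by an isomorphic copy lying in $\cH(\chi)$ first (hypotheses (a), (b) and all the solvability notions depend only on isomorphism types). The second, which I expect to be the more delicate, is that, lacking a weak lifting, $\bfc_{1,3}$ is only a \emph{weak} uni-construction problem, so the appeal to Theorem~\ref{a23} should be phrased in the form that covers weak uni-construction problems with no lifting --- the auxiliary datum $\psi$ of a uni-construction problem plays no role in the conclusion, which concerns purely the $\chi$-solvability of the pair $(\gB_{\bfc_{1,3}},\gA_{\bfc_{1,3}})$.
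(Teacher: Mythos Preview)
Your argument is circular. In this paper, Corollary~\ref{a14} comes \emph{before} Theorem~\ref{a23}, and the proof of Theorem~\ref{a23} explicitly invokes Corollary~\ref{a14} (``It follows from Corollary~\ref{a14} that there exists a forcing extension \dots''). So you cannot quote Theorem~\ref{a23} here. The paper instead appeals directly to the external result \cite[Theorem~4]{Sh:301}: since $\varphi_{\bfc_{1,3}}$ has no \emph{weak} lifting, that theorem produces a forcing extension in which $\bfc_{1,3}$ is not $\chi'$-solvable for any prescribed $\chi'$. This is exactly the point at which your second ``point of care'' bites: Theorem~\ref{a23} is stated for uni-construction problems (which carry a weak lifting as part of the data), and $\bfc_{1,3}$, having no weak lifting, is not one; the paper avoids this by using the stronger hypothesis (a) with the older theorem rather than a weakened form of the newer one.

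A second, smaller issue: your absoluteness argument rests on the claim that ``every structure in sight is finite''. The sentence ``We only work with finite structures'' in \S\ref{0B} refers to the vocabulary (finitely many sorts, relations, functions, constants), not to the universes; the models $\gC$ here are typically infinite (compare \S\ref{global-2}, where one imposes $|\gB_{\bfc}|\le\lambda$). The persistence of a lifting $\psi$ of $\varphi_{\bfc_{2,3}}$ to a forcing extension is nevertheless correct, simply because ``$\psi$ is a group homomorphism splitting $\varphi$'' is absolute between transitive models; so this slip is not fatal, but the justification you gave for it is wrong.

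Finally, on the parameter bookkeeping: the paper's route is slightly different from yours and avoids moving $\gC$ into $\cH(\chi)$. It first uses \cite[Theorem~3]{Sh:160} in $V$ to get $\bfc_{2,3}$ solvable, hence $\chi'$-solvable for some $\chi'\ge\chi$; then it forces (via \cite[Theorem~4]{Sh:301}) so that $\bfc_{1,3}$ is not $\chi'$-solvable; Conclusion~\ref{a11} then gives $\bfc_{1,2}$ not $\chi'$-solvable, hence not $\chi$-solvable. Your alternative of replacing $\gC$ by an isomorphic copy in $\cH(\chi)$ would also work, but it is not how the paper organises the argument.
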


\begin{proof}
Fix an infinite cardinal $\chi$. Since $\varphi_{\bfc_{2,3}}$ has a lifting, we observe, in light of \cite[Theorem 3]{Sh:160}, that it is solvable. Hence, for some cardinal $\chi' \geq \chi$, it is $\chi'$-solvable.

On the other hand, since $\varphi_{\bfc_{1,3}}$ has no weak lifting, by \cite[Theorem 4]{Sh:160}, we can find a generic extension $V[G]$ of the universe in which $\varphi_{\bfc_{1,3}}$ is not $\chi'$-solvable. In view of Conclusion \ref{a11}, we know that $\bfc_{1,2}$ is not $\chi'$-solvable, and therefore, it is not $\chi$-solvable in $V[G]$ either.
\end{proof}
\begin{notation}
For a group $G$ and an automorphism $\psi$ of $G$,  let:
\begin{itemize}
\item $\psi^0:=\id_G$ denote the identity automorphism,

\item For $n>0, \psi^n$ is the composition of $\psi, n$-times,

\item For $n >0$, $\psi^{-n}$ is defined as $\psi^{-n}(x)= (\psi^n(x))^{-1}$.
\end{itemize}
\end{notation}
\begin{definition}\label{skew}
Let $G'$ be a group, and let $\psi$ be an automorphism of $G'$.
We are going to define a group-structure over the following set:
 $$G:=\{y^nx: n\in\mathbb{Z}\emph{ and }x\in G' \},$$
 via the following rules:
 	\begin{enumerate}
 	\item[(a)]  The identity element is $y^0e_{G'}$,
 	\sn
 	\item[(b)]    The multiplication $(y^nx_1)\times (y^mx_2)$  is defined by $y^{n+m}( \psi^m(x_1)\psi^n(x_2))$, and recall that $\psi^0$ is the identity map.
 		\item[(c)] The inverse of $(y^n x)$ is
$y^{-n}  x^{-1}  $, i.e.,$$(y^n x)\times (y^{-n}  x^{-1}) =y^{n-n}   \psi^{-n}(x) \psi^{n} (x^{-1})=y^0\psi^0(e_{G'})=y^0e_{G'}=e_G.$$
 \end{enumerate}
 We denote the resulting group by $G:=\mathbb{Z}\ltimes_{\psi}G'$.
\end{definition}

\begin{remark} Adopt the notation of Definition \ref{skew}.
	
	(1) The set $\{y^ne_{G'}\mid n\in\mathbb{Z}\}$ is a subgroup of $G$, and there is an isomorphism   $\pi:(\mathbb{Z},+)\to \{y^ne_{G'}\mid n\in\mathbb{Z}\}$ of
groups, where $\pi(n):=y^ne_{G'}$.

(2) The assignment
$x\mapsto y^0x$ defines a morphism $G'\to G$.

(3) For simplicity, let us denote $y^0x=x$ and $ye_{G'}=y$. This shows that
 $$x\times y^n:=y^0x\times y^ne_{G'}=
 y^{0+n}(\psi^n(x)\psi^0(e_{G'}))=
 y^n\psi^n(x).$$
Similarly, by using the above identification,
we have $y^{-n}xy^n=\psi^n(x).$
\end{remark}

The above construction is a special case of
HNN extensions, where HNN is an abbreviation for  Higman-Neumann-Neumann.
Here, is an algebraic definition equipped with a topological motivation:

\begin{discussion}(See \cite[\S 11: HNN extensions]{ro})
	
	(i) Suppose $G$ is a group, $A$ and $B$ are  two isomorphic subgroups of  $G$ and $f:A\stackrel{\cong}\longrightarrow B$. Then the group having the presentation
	$$(G; p\mid p^{-1}ap = f(a)\quad \forall a \in A)$$
	is called an HNN extension of $G$;
	
	(ii) Consider a
manifold $X$ with homeomorphic disjoint subspaces $A$ and
	$B$, and let $f: A \to B$ be a homeomorphism. Let $\hat{X}$ be $X$ that we add a handle from $A$ to $B$.
Then fundamental group of $\hat{X}$ is the HNN extension over the fundamental group of ${X}$ (see \cite[Theorem 11.75]{ro}).
\end{discussion}

\begin{notation}\label{sup}
	Suppose $\{H_i:i\in I\}$ is a family of groups.
	Let  $H$ be the family of all sequences $(h_i)_{i\in I}$ so that $h_i\in H_i$  with the property that $\{i:h_i\neq e_{H_{i}}\}$ is  finite, and we define $$\supp((h_i)_{i\in I}):=\{i:h_i\neq e_{H_{i}}\}.$$
	Now, take $(h_i),(g_i)\in H$ and define
	$(h_i)_{i\in I}\times(g_i)_{i\in I}:=(h_ig_i)_{i\in I}\in H$. By this multiplicative operation, $H$ equips with a  structure of  group. A common notation for $H$, is $\bigoplus_{i\in I}H_{i}$.
	
\end{notation}
\begin{proposition}
\label{a17}
Let $G_1$ and $G_2$ be two groups, and suppose there is an onto homomorphism $\varphi_{1,2}: G_2 \twoheadrightarrow G_1$ that has no lifting but has a weak lifting. Then, there exist a group $G_3$ and a homomorphism $\varphi_{2,3}$ such that:

\begin{enumerate}
\item[(a)]   $G_3$ is a group;

\item[(b)]    $\varphi_{2,3}:G_3\twoheadrightarrow G_2$ is a surjective  homomorphism
  which has a lifting;

\item[(c)]  $\varphi_{1,3} = \varphi_{1,2} \circ \varphi_{2,3} \in
  \Hom(G_3,G_1)$ is a surjective  homomorphism with no weak lifting.
\end{enumerate}
\end{proposition}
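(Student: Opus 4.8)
The plan is to reduce the proposition to a single elementary remark and then write down $G_3$ explicitly as a free product.

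\emph{The guiding remark.} Suppose $\varphi_{2,3}\colon G_3\twoheadrightarrow G_2$ is \emph{any} surjective group homomorphism with $\mathcal{Z}(G_3)=\{1\}$, and set $\varphi_{1,3}:=\varphi_{1,2}\circ\varphi_{2,3}$. I claim $\varphi_{1,3}$ has no weak lifting. Indeed, if $\Psi\colon G_1\to G_3$ were one, then condition (c) of Definition \ref{y2}(2) says $\pi_{G_3}\circ\Psi$ is a homomorphism into $G_3/\mathcal{Z}(G_3)$; but $\mathcal{Z}(G_3)=\{1\}$ makes $\pi_{G_3}$ an isomorphism, so $\Psi$ is itself a homomorphism, and since $\varphi_{1,3}\circ\Psi=\id_{G_1}$ (condition (b) of Definition \ref{y2}(2)) it is an honest lifting of $\varphi_{1,3}$. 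Then $\varphi_{2,3}\circ\Psi\in\Hom(G_1,G_2)$ and $\varphi_{1,2}\circ(\varphi_{2,3}\circ\Psi)=\varphi_{1,3}\circ\Psi=\id_{G_1}$, i.e.\ $\varphi_{2,3}\circ\Psi$ is a lifting of $\varphi_{1,2}$, contradicting the hypothesis. (Only ``$\varphi_{1,2}$ has no lifting'' is used here; the assumed weak lifting of $\varphi_{1,2}$ plays no role.) So it is enough to exhibit a \emph{centerless} group $G_3$ together with a \emph{split} surjection onto $G_2$.

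\emph{The construction.} First, $G_2\neq\{1\}$, since otherwise $G_1=\{1\}$ and $\varphi_{1,2}$ would be split by the identity. Pick any nontrivial group $C$ (say $C=\mathbb{Z}/2\mathbb{Z}$), let $G_3:=G_2\ast C$ be the free product, and let $\varphi_{2,3}\colon G_3\twoheadrightarrow G_2$ be the canonical retraction (identity on $G_2$, trivial on $C$). Clause (a) is trivial; $\varphi_{2,3}$ is onto and is split by the inclusion $G_2\hookrightarrow G_2\ast C$, which gives clause (b); and $\mathcal{Z}(G_3)=\mathcal{Z}(G_2\ast C)=\{1\}$ since a free product of two nontrivial groups has trivial center (a standard normal-form argument). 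Finally $\varphi_{1,3}=\varphi_{1,2}\circ\varphi_{2,3}$ is onto as a composite of surjections, and the guiding remark shows it has no weak lifting; this is clause (c).

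\emph{Where the content is, and an alternative.} There is no serious obstacle: the one point that needs care is that ``weak lifting'' is strictly weaker than ``lifting'' in general, so one must genuinely kill $\mathcal{Z}(G_3)$ — a naive choice such as $G_3=G_1\times G_2$, or $V\rtimes G_2$ with a non-faithful action, can still carry a weak lifting of $\varphi_{1,3}$. If one prefers a fully self-contained center computation to quoting the fact about free products, one may instead take $G_3:=V\rtimes_\theta G_2$ where $(V,+)$ is abelian carrying a faithful $G_2$-action with no nonzero fixed vector (e.g.\ the augmentation ideal of $\mathbb{Q}[G_2]$ with the left regular action), with $\varphi_{2,3}$ the projection, split by $g\mapsto(0,g)$; a direct check then gives $\mathcal{Z}(V\rtimes_\theta G_2)=V^{G_2}\times\{g\in\mathcal{Z}(G_2):\theta_g=\mathrm{id}_V\}=\{1\}$, after which the guiding remark again delivers clauses (a)--(c).
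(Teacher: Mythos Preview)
Your proof is correct. Both you and the paper isolate the same key observation---if $G_3$ is centerless then any weak lifting of $\varphi_{1,3}$ is an honest lifting, and composing with $\varphi_{2,3}$ then splits $\varphi_{1,2}$---so the only work is to build a centerless $G_3$ together with a split surjection onto $G_2$.

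The difference lies in the construction. The paper takes $G'_3=\bigoplus_{n\in\mathbb{Z}}G_{2,n}$ (a $\mathbb{Z}$-indexed direct sum of copies of $G_2$), lets the shift act as an automorphism $\psi_*$, and sets $G_3=\mathbb{Z}\ltimes_{\psi_*}G'_3$; the retraction $\varphi_{2,3}$ collapses all copies back to $G_2$ and kills the generator $y$, with $\psi_0$ as the splitting. The centerlessness is then checked by a direct combinatorial argument on supports. Your free-product construction $G_3=G_2\ast C$ is considerably more economical: the retraction and its splitting are immediate from the universal property, and the triviality of the center is a standard normal-form fact. Your semidirect-product alternative with a faithful fixed-point-free module is closer in flavor to the paper's approach but still more streamlined. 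You are also right that the hypothesis ``$\varphi_{1,2}$ has a weak lifting'' is never invoked---neither your argument nor the paper's uses it; it is presumably stated because that is the situation arising in the intended application.
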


\begin{proof}(a):
First, we introduce the auxiliary group $G'_3 := \bigoplus\{G_{2,n}:n \in \bbZ\}$, where $G_{2,n} \cong G_2$ for each $n \in \bbZ$. Let also $\psi_n$ denote the corresponding isomorphism $\psi_n: G_2 \to G_{2,n}$. Note that:
\begin{enumerate}
	\item[(1)] $\bigcup_{n \in \bbZ} G_{2,n}$ generates $G'_3$,
	\item[(2)] $G_{2,n} \cap \bigcup_{k \in \bbZ \backslash \{n\}} G_{2,k} = \{e_{G'_3}\}$,
	\item[(3)] the family of groups $\{G_{2,n}: n \in \bbZ\}$ pairwise commutes.
\end{enumerate}

Let $\psi_*$ be the automorphism of $G'_3$ such that, for each $m$,
$$\psi_* \rest G_{2,m} = \psi_{m+1} \circ \psi_m^{-1}.$$
Set $G_3 := \mathbb{Z} \ltimes_{\psi_*} G'_3$. Recall from Definition \ref{skew} that $G_3$ is generated by $G'_3 \cup \{y\}$, subject to the following relations:
$$ y^{-1} x y = \psi_*(x) \quad \forall x \in G'_3.$$

 (b):
Let $\varphi_{2,3}:G_3 \rightarrow G_2$ be the unique homomorphism
from $G_3$ onto $G_2$ such that:

\begin{enumerate}
\item[(1)]  $\varphi_{2.3} \rest G_{2,n} =
\psi^{-1}_n \text{ for } n \in \bbZ$,

\item[(2)]  $\varphi_{2.3}(y) = e_{G_2}$.
\end{enumerate}

Now, $\psi_0$ is clearly a lifting of $\varphi_{2,3}$, i.e. clause (b)
holds.

$(c)$: In order to prove clause (c), we first prove the following claim.

\begin{claim}
\label{claim}
	 $G_3$ has trivial center.
\end{claim}
\begin{proof} Suppose $x \in G'_3 \backslash \{e_{G'_3}\}$. Clearly, we have $y^{-1} x y \ne x$, as can be seen using $\supp(x)$ (see Notation \ref{sup}). Now, let $\alpha \in G_3 \backslash G'_3$. By Definition \ref{skew}, $\alpha$ has the form $y^n x_1$, where $n \in \mathbb{Z}$, $x_1 \in G'_3$, and
	either $n \ne 0$ or $x_1 \ne e_{G'_3}$.
	
	Next, choose $x_2 \in G'_2 \backslash \{e_{G'_2}\}$ such that
	\[
	\ell := \min(\supp(x_2)) > \max(\supp(x_1)) + n.
	\]
	
	Now, assume for contradiction that $y^n x_1$ and $x_2$ commute. This leads to:

$$\begin{array}{ll}
y^n (x_1x_2)&=(y^n x_1)x_2\\
&=x_2(y^n x_1)\\
 &=x_2(yy^{n-1} x_1)\\
&= (x_2y)y^{n-1} x_1\\
&=(y\psi_*(x_2))y^{n-1} x_1\\
&=y^{n}(\psi_*^n(x_2) x_1).
\end{array}$$
By definition, this implies
$$ \psi_*^n(x_2) x_1 = x_1x_2 \quad (+). $$

Recall that $\psi_* \rest G_{2,m} = \psi_{m+1} \circ \psi^{-1}_m$. By combining this equality with $(+)$, we observe that at least one coordinate on the left-hand side is nonzero, whereas the corresponding coordinate on the right-hand side is also nonzero. This contradiction confirms that $G_3$ has a trivial center, as required.
\end{proof}
Now, we proceed with the proof of the proposition. By definition, $\varphi_{1,3} \in \Hom(G_3, G_1)$ is surjective. We aim to show that it has no weak lifting. Suppose, for the sake of contradiction, that there exists a function $\psi: G_1 \rightarrow G_3$ such that $\varphi_{1,3} \circ \psi = \id_{G_1}$, and that the composite map $\pi_3 \circ \psi$ belongs to $\Hom(G_1, G_3/\mathcal{Z}(G_3))$, where $\pi_3: G_3 \rightarrow G_3/\mathcal{Z}(G_3)$ is the canonical homomorphism.
However, according to Claim \ref{claim}, $G_3$ has a trivial center. Therefore, $\psi$ must be a homomorphism from $G_1$ into $G_3$. As a result, $\varphi_{2,3} \circ \psi$ is a homomorphism from $G_1$ into $G_2$, and in fact, it is injective, since $\varphi_{1,3} \circ \psi$ is injective. This contradicts the assumption that $\varphi_{1,2}$ has no weak lifting.
\end{proof}

\begin{corollary}
\label{a20}Let $\chi$ be a cardinal.
Suppose $\varphi \in \Hom(G_2,G_1)$ is onto, and does not split. Then there exists a weak uni-construction problem  $\bfc$,
such that  in some forcing extension,
 $\bfc$ is not $\chi$-solvable.
\end{corollary}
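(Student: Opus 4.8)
The plan is to combine Proposition~\ref{a17} with Corollary~\ref{a14}; the only extra ingredient needed is the passage from a chain of group epimorphisms to an actual $3$-sorted model.

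First I would make a harmless reduction: since $\cH(\chi)\subseteq\cH(\chi')$ whenever $\chi\le\chi'$, being $\chi$-solvable implies being $\chi'$-solvable, so a forcing extension witnessing ``not $\chi'$-solvable'' also witnesses ``not $\chi$-solvable''; hence, replacing $\chi$ by $\max(\chi,\aleph_0)$, we may assume $\chi$ is infinite. Next, because $\varphi$ is onto and does not split, $\varphi$ has no lifting, and I would distinguish two cases. If $\varphi$ has \emph{no} weak lifting, put $G_3:=G_2$ and $\varphi_{2,3}:=\id_{G_2}$; then $\varphi_{2,3}$ has the lifting $\id_{G_2}$ and $\varphi_{1,3}:=\varphi\circ\varphi_{2,3}=\varphi$ has no weak lifting. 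If $\varphi$ \emph{does} have a weak lifting, then, since it has no lifting, I would apply Proposition~\ref{a17} with $\varphi_{1,2}:=\varphi$ to obtain a group $G_3$ and a surjection $\varphi_{2,3}:G_3\twoheadrightarrow G_2$ that has a lifting and such that $\varphi_{1,3}:=\varphi\circ\varphi_{2,3}:G_3\twoheadrightarrow G_1$ has no weak lifting. Either way I have a chain of surjective homomorphisms
$$G_3\stackrel{\varphi_{2,3}}{\twoheadrightarrow}G_2\stackrel{\varphi}{\twoheadrightarrow}G_1$$
with $\varphi_{2,3}$ admitting a lifting and the composite $\varphi_{1,3}$ having no weak lifting.

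The core step is to realize this chain as automorphism groups of a nested family of structures: I would produce a $3$-sorted model $\gC$ together with isomorphisms $\aut(\sort_1(\gC))\cong G_1$, $\aut(\sort_{1,2}(\gC))\cong G_2$, $\aut(\gC)\cong G_3$, under which the two restriction maps $\aut(\gC)\to\aut(\sort_{1,2}(\gC))\to\aut(\sort_1(\gC))$ become $\varphi_{2,3}$ and $\varphi$; in particular these restriction maps are onto. Such a realization is available from the standard construction of many-sorted models with prescribed automorphism groups and restriction maps (see \cite{Sh:160} and \cite{Sh:301}, which rely on the correspondence between closed subgroups of an automorphism group and expansions by relations for the orbits on finite tuples). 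I expect this to be the main technical obstacle, since one must fit the three automorphism groups and the two restriction maps together \emph{simultaneously}, not one at a time. Granting it, $\gC$ satisfies Hypothesis~\ref{hy}, and the associated weak uni-construction problems satisfy $\varphi_{\bfc_{1,2}}=\varphi$, $\varphi_{\bfc_{2,3}}=\varphi_{2,3}$ (which has a lifting) and $\varphi_{\bfc_{1,3}}=\varphi_{1,3}$ (which has no weak lifting).

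Finally I would invoke Corollary~\ref{a14} for this $\gC$ and the infinite cardinal $\chi$: its hypotheses $(a)$ and $(b)$ hold by the previous paragraph, so there is a forcing extension of the universe in which $\bfc_{1,2}$ is not $\chi$-solvable. Setting $\bfc:=\bfc_{1,2}$, a weak uni-construction problem by construction, completes the proof.
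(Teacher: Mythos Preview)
Your overall plan matches the paper's: apply Proposition~\ref{a17} to get a chain $G_3\twoheadrightarrow G_2\twoheadrightarrow G_1$, realize it as the automorphism groups of a $3$-sorted model $\gC$ satisfying Hypothesis~\ref{hy}, and invoke Corollary~\ref{a14}. Your case distinction on whether $\varphi$ already has a weak lifting is a bit more careful than the paper (which applies \ref{a17} directly); this is harmless, since on inspection the proof of \ref{a17} does not actually use the weak-lifting hypothesis.

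The one genuine gap is exactly the step you flagged. The correspondence you cite---closed subgroups of $\Aut(M)$ versus expansions of $M$ by orbit relations---runs in the wrong direction: it lets you \emph{cut down} the automorphism group of an existing structure, but here you must \emph{build} a $3$-sorted structure from scratch, realizing three prescribed groups and two prescribed surjections simultaneously as automorphism groups and restriction maps. Neither \cite{Sh:160} nor \cite{Sh:301} contains such a result off the shelf. The paper fills this gap with an explicit Cayley-type construction: the universe of sort $\ell$ is the underlying set of $G_\ell$; for each $a\in G_\ell$ one puts in the unary function $F^{\gC}_{\ell,a}(b)=a\cdot b$ (left translation), and one adds the homomorphisms $F^{\gC}_\ell=\varphi_{\ell,\ell+1}$ as cross-sort function symbols. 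A short computation then shows that every automorphism of $\gC$ is left multiplication by some $c\in G_3$ on sort $3$ and by $\varphi_{2,3}(c)$, $\varphi_{1,3}(c)$ on sorts $2$, $1$, so $\Aut(\gC)\cong G_3$, $\Aut(\sort_{1,2}(\gC))\cong G_2$, $\Aut(\sort_1(\gC))\cong G_1$, with restriction maps exactly $\varphi_{2,3}$ and $\varphi_{1,2}$. Once you supply this construction, your argument coincides with the paper's.
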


\begin{proof}
Let $G_1$ and $G_2$ be as above and set $\varphi_{1,2} := \varphi$. By Proposition \ref{a17}, we can find a group $G_3$ and two homomorphisms
$\varphi_{2,3}$ and $\varphi_{1,3}$, which fit in the following diagram
 $$\xymatrix{
	&  G_2\ar[rr]^{\varphi_{1,2}}&&G_1\\
	&& G_3\ar[ur]_{\varphi_{2,3} \rightsquigarrow\emph{ with no weak lifting}}\ar[ul]^{\emph{with lifting }\rightsquigarrow\varphi_{1,3}}
	&&&\\}$$
 \Wilog \, the groups $G_1, G_2$
 and $G_3$
are pairwise disjoint. We define the three sorted model $\gC$ as
follows:

\begin{enumerate}
\item[$(*)$]
\begin{enumerate}
\item[(a)]  the set of elements of $\sort_\ell(\gC)$ is the set of
  elements of $G_\ell$ for $\ell=1,2,3$;

\item[(b)]  $F^{\gC}_\ell = \varphi_{\ell,\ell +1}$ for $\ell=1,2$;

\item[(c)]  for $\ell=1,2,3$ and $a \in G_\ell$, the homomorphism $F^{\gC}_{\ell,a}:G_\ell \rightarrow G_\ell$ is
defined as  $F_{\ell,a}^{\gC}(b) = a \cdot b$ for $b \in G_\ell$.
\end{enumerate}
\end{enumerate}

Now, we show that the assumptions of Corollary \ref{a14} hold.
 To this end, note that we consider the sorts only as a set of elements and forget about
the multiplication in $G_i$. The entire structure is given by the functions.
We bring the following two claims:
\begin{enumerate}
	\item[(i)] $G_1 \cong \Aut(\sort_1(\gC)), G_2 \cong \Aut(\sort_{1,2}(\gC))$ and $G_3 \cong \Aut(\gC)$ and
	\item[(ii)] $\varphi_{i,j}:\Aut(\sort_{i,j}(\gC))\to  \Aut(\sort_i(\gC))$ is the natural restriction map.
\end{enumerate}
For clause (i), we only show that $G_3 \cong \aut(\gC)$, as the other cases
can be proved in a similar way.

In order to see $G_3 \cong \aut(\gC)$, define the  map
$$\theta: G_3 \rightarrow \aut(\gC)$$ which sends some element $c \in G_3$ to
the map $\sigma_c : \gC \to \gC$, which is defined via

	$$
\sigma_c( g	)=\left\{\begin{array}{ll}
c \cdot g &\mbox{if } g \in G_3 \\
\varphi_{2,3} (c)	&\mbox{if }  g \in G_2\\
\varphi_{1,3} (c)	&\mbox{if }  g \in G_1
\end{array}\right.
$$
We have to show that $\theta$ is well-defined. To see this, let $c \in G_3$. First, we show that $\theta(c)$ is a homomorphism of $\gC$. We consider to $a\in G_3$, and show $\sigma_c$ behaves well with respect to $F^{\gC}_{a,3}$, in the following sense:
$$
\sigma_c(F^{\gC}_{a,3}(b))=c \cdot b \cdot a= \sigma_c(b) \cdot a=F^{\gC}_{a,3}( \sigma_c(b)),
$$where, $b \in G_3$.
Furthermore, it behaves well with respect to $F_{2,3}^{\gC}$:

$$\begin{array}{ll}
\sigma_c(F_{2,3}^{\gC}(a))&=\sigma_c(\varphi_{2,3}(a))\\
&=\varphi_{2,3}(c) \cdot \varphi_{2,3}(a)\\
&=\varphi_{2,3}(c \cdot a)\\
&= \varphi_{2,3}( \sigma_c(a))\\
&=F_{2,3}^{\gC}(  \sigma_c(a)).
\end{array}$$

We apply similar arguments for $a, b \in G_i$ with $i = 1, 2$. Hence, $\sigma_c$ is indeed a homomorphism of $\gC$. It is straightforward to verify that each $\theta(c)$ is a bijection, as it corresponds to left translation in the groups. Thus, $\theta(c) \in \aut(\gC)$, and $\theta$ is well-defined.

Now, we show that $\theta$ is an isomorphism of groups. To verify that $\theta$ is a homomorphism, it must preserve the group operation. Let $c_1, c_2 \in G_3$ be arbitrary. Then, for any $a \in G_3$, we have
\[
\theta(c_1 \cdot c_2)(a) = c_1 \cdot c_2 \cdot a = \theta(c_1)(c_2 \cdot a) = (\theta(c_1) \theta(c_2))(a),
\]
and similarly for $a \in G_i$ with $i = 1, 2$.

Furthermore, $\theta$ respects inverses, as
\[
\theta(c^{-1}) = \theta(c)^{-1}.
\]

To show injectivity, suppose $\theta(c) = \theta(c')$ for some $c, c' \in G_3$. Then, for all $a \in G_3$,
\[
\sigma_c(a) = c \cdot a = c' \cdot a = \sigma_{c'}(a).
\]
Since left multiplication is injective, it follows that $c = c'$, proving that $\theta$ is injective.

For surjectivity, consider an arbitrary automorphism $\sigma \in \Aut(\gC)$. Setting $c = \sigma(e_{G_3})$, we claim that
\[
\sigma = \sigma_c.
\]
This follows from the identity
\[
\sigma(F^{\gC}_a(e_{G_3})) = F_a^{\gC}(\sigma(e_{G_3})).
\]
Since $\sigma$ is an arbitrary automorphism, this shows that every element of $\Aut(\gC)$ is of the form $\theta(c)$ for some $c \in G_3$. Thus, $\theta: G_3 \to \Aut(\gC)$ is an isomorphism of groups.

To establish clause (ii), it suffices to show that the maps $\varphi_{i,j}$ correspond to the natural restriction maps between the automorphism groups. This follows directly from our earlier observations: every automorphism of $\gC$ corresponds to right translation on each sort. Specifically, if the restriction of $\sigma$ to the $j^{th}$ sort is given by translation by $b \in G_j$, then on the $i^{th}$ sort, it translates by $\varphi_{i,j}(b)$.

 By Proposition \ref{a17} and the definition of the 3-sorted model $\gC$, we observe that $\varphi_{\bfc_{1,3}}$ has no weak lifting, while $\varphi_{\bfc_{2,3}}$ admits a lifting. Consequently, Corollary \ref{a14} applies, implying that $\bfc_{1,2}$ is not $\chi$-solvable in some forcing extension. Thus, by setting $\bfc := \bfc_{1,2}$, the corollary follows.
\end{proof}

Now, we are ready to prove the main result of this section.
\begin{theorem}
\label{a23}
Let $\chi$ be a cardinal and $\bfc$ be a uni-construction problem. If $\bfc$   has no lifting, then  in some forcing extension, $\bfc$ has no
$\chi$-solution.
\end{theorem}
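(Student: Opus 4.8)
The strategy is to reduce Theorem \ref{a23} to Corollary \ref{a20} by passing from the uni-construction problem $\bfc$ to the associated homomorphism of automorphism groups. Given a uni-construction problem $\bfc = \langle \gB, \gA, H, K, G, \varphi, \psi \rangle$ with no lifting, we have by definition $H = \aut(\gB)$, $G = \aut(\gA)$, and $\varphi \colon H \twoheadrightarrow G$ is the natural restriction map, which is onto. The hypothesis ``$\bfc$ has no lifting'' means precisely that $\varphi$ does not split in the sense of Definition \ref{y2}(3). So we set $G_2 := H$ and $G_1 := G$ and apply Corollary \ref{a20} to $\varphi$: there is a weak uni-construction problem $\bfd$ which, in some forcing extension $V[G]$, is not $\chi$-solvable.

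The remaining work is to transfer non-$\chi$-solvability from $\bfd$ back to $\bfc$. Here I would invoke the machinery of Observation \ref{a5} together with the construction underlying Corollary \ref{a20}: the weak uni-construction problem $\bfd$ produced there is built from the three-sorted model $\gC$ whose sorts are (the underlying sets of) $G_1, G_2, G_3$ with the group multiplications encoded as unary function symbols, and by construction $\aut(\sort_1(\gC)) \cong G_1 = G = \aut(\gA)$ while $\aut(\sort_{1,2}(\gC)) \cong G_2 = H = \aut(\gB)$. Thus $\gB_{\bfd}$ and $\gB_{\bfc}$ carry isomorphic automorphism groups over $\gA$, and $\gA_{\bfc} \cong \gA_{\bfd}$ in the relevant sense. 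The point is that solvability of a uni-construction problem depends only on the class $K^1$ and $K^2$ of isomorphism types involved and on the restriction map between them, so an $F$ witnessing $\chi$-solvability of $\bfc$ would, via the isomorphisms on automorphism groups and the correspondence between closed subgroups and expansions, yield an $F$ witnessing $\chi$-solvability of $\bfd$. Since the latter fails in $V[G]$, so must the former; hence $\bfc$ has no $\chi$-solution in $V[G]$.

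Alternatively, and perhaps more cleanly, one can avoid re-deriving the transfer by observing that Corollary \ref{a20} together with Corollary \ref{a14} already carries all the content: the three-sorted model $\gC$ realizes $\bfc$ (up to the relevant isomorphisms) as its ``$\bfc_{1,2}$'' component, $\varphi_{\bfc_{1,3}}$ has no weak lifting and $\varphi_{\bfc_{2,3}}$ has a lifting, and Corollary \ref{a14} directly delivers a forcing extension in which $\bfc_{1,2}$ is not $\chi$-solvable. So the proof is essentially a restatement: apply Proposition \ref{a17} to $\varphi$ (which has no lifting, hence in particular the hypothesis of Corollary \ref{a20} is met once we note that a map with no lifting and no weak lifting is covered a fortiori, while if it has a weak lifting but no lifting Proposition \ref{a17} applies directly; the case of no weak lifting at all is even easier and handled directly by \cite[Theorem 4]{Sh:301}), build $\gC$, and quote Corollary \ref{a14}.

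**Main obstacle.** The delicate point is the dichotomy in the hypothesis of Proposition \ref{a17}, which requires $\varphi$ to have a \emph{weak} lifting. Our $\bfc$ only gives ``no lifting''; it might also have no weak lifting. So the proof must split into two cases: if $\varphi$ has no weak lifting, then \cite[Theorem 4]{Sh:301} applied directly to $\bfc$ (realized as a two-sorted model) already produces a forcing extension killing $\chi$-solvability, with no need for the three-sorted detour; if $\varphi$ has a weak lifting (but no lifting), then we run the argument of Corollary \ref{a20} verbatim. Checking that the first case genuinely follows from \cite[Theorem 4]{Sh:301} — i.e.\ that ``no weak lifting of $\varphi$'' is exactly the input that theorem requires to force non-$\chi$-solvability of the corresponding construction — is the step I would scrutinize most carefully, though given the framework established in \S2 it should be routine.
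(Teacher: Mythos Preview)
There are two issues with your proposal, one minor and one substantive.

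\textbf{The dichotomy is illusory.} You worry about splitting into cases according to whether $\varphi_{\bfc}$ has a weak lifting or not. But reread Definition~\ref{y5}(1): a uni-construction problem is a tuple $\langle \gB, \gA, H, K, G, \varphi, \psi \rangle$ where clause~(f) \emph{requires} that $\psi$ weakly split $\varphi$. (A \emph{weak} uni-construction problem, Definition~\ref{y5}(2), is the version without this clause.) So the hypothesis ``$\bfc$ is a ucp with no lifting'' already places you in the situation ``weak lifting exists, lifting does not'', and Proposition~\ref{a17} applies directly. No case analysis is needed, and no separate appeal to \cite[Theorem 4]{Sh:301} is required.

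\textbf{The real gap: the transfer from $\bfd$ to $\bfc$ fails.} Your main line is to quote Corollary~\ref{a20}, obtain a weak ucp $\bfd$ that is not $\chi$-solvable in some extension, and then argue that this forces $\bfc$ to be not $\chi$-solvable because the automorphism groups match. This does not work. In the three-sorted model $\gC$ built inside the proof of Corollary~\ref{a20}, the sorts are the \emph{underlying sets of the groups} $G_1, G_2, G_3$, and the structure is given by left-translation functions. So $\gA_{\bfc_{1,2}} = \sort_1(\gC)$ is the set of elements of $G_1 = \aut(\gA_{\bfc})$, which is in general neither equal nor isomorphic to $\gA_{\bfc}$ as a model. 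Solvability is a statement about the classes $K^1_{\bfc}, K^2_{\bfc}$ of models isomorphic to $\gA_{\bfc}, \gB_{\bfc}$; having the same automorphism \emph{groups} does not put $\gA_{\bfd}$ and $\gA_{\bfc}$ in the same isomorphism class, and Observation~\ref{a5} only helps when $\gA_{\bfc} = \gA_{\bfd}$ literally and one $\gB$ expands the other.

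The paper repairs exactly this point: rather than reuse the $\gC$ from Corollary~\ref{a20}, it builds a \emph{new} three-sorted model $\gC$ in which $\sort_{1,2}(\gC) = \gB_{\bfc}$ literally (so $\sort_1(\gC) = \gA_{\bfc}$), adjoins the set of elements of $G_3$ as a third sort, and equips it with unary functions $F^{\gC}_{1,\alpha}, F^{\gC}_{2,\alpha}$ (indexed by elements $a_\alpha$ of $\gB_{\bfc}$) sending $b \in G_3$ to $(\varphi_{1,3}(b))(a_\alpha)$ and $(\varphi_{2,3}(b))(a_\alpha)$, together with left translations $F^{\gC}_{3,c}$ on $G_3$. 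One then checks (as in the proof of Corollary~\ref{a20}) that the resulting $\bfc_{1,3}$ has no weak lifting and $\bfc_{2,3}$ has a lifting, so Corollary~\ref{a14} applies. The payoff is that now $\bfc_{1,2} = \bfc$ on the nose, and the conclusion is about $\bfc$ itself. This construction --- gluing $G_3$ onto the actual model $\gB_{\bfc}$ via the action functions --- is the missing idea in your proposal.
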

\begin{proof}
Recall that $ G_{\bfc} := \aut(\gA_{\bfc})$ and $ H_{\bfc} := \aut(\gB_{\bfc})$. We set $G_1 := G_{\bfc}$, $G_2 := H_{\bfc}$, and $\varphi_{1,2} := \varphi_{\bfc}$. In light of Proposition \ref{a17}, we can find a group $G_3$ and a surjective homomorphism $\varphi_{2,3}: G_3 \twoheadrightarrow G_2$, which has a lifting such that the homomorphism $\varphi_{1,3} = \varphi_{1,2} \circ \varphi_{2,3} \in \Hom(G_3, G_1)$ has no weak lifting. Without loss of generality, we may assume that
$$a \in \gB_{\bfc} \wedge b \in G_3 \Rightarrow a \neq b.$$
Let $\langle a_\alpha : \alpha < \alpha_* \rangle$ list the elements of $\gB_{\bfc}$. We define a 3-sorted model $\gC$ as follows:

\begin{enumerate}
\item[$(\ast)$]
\begin{enumerate}
\item[(a)]  $\sort_{1,2}(\gC) = \gB_{\bfc}$ so $\sort_1(\gC) =
  \gA_{\bfc}$;

\item[(b)]  the set of elements of $\sort_3(\gC)$ is the set of
 elements of $G_3$;

\item[(c)] $F^{\gC}_{1,\alpha}:G_3 \rightarrow \sort_1(\gC)$ is defined
by the help of $ \varphi_{1,3}$. More precisely,
for any $b \in G_3$,   we set $ F^{\gC}_{1,\alpha}(b) =
(\varphi_{1,3}(b))(a_\alpha)$;

\item[(d)]

 $F^{\gC}_{2,\alpha}:G_3 \rightarrow \gB_{\bfc}$ is defined
  by:
\[
b \in G_3 \Rightarrow F^{\gC}_{2,\alpha}(b) =
(\varphi_{2,3}(b))(a_\alpha);
\]

\item[(e)]  $F^{\gC}_{3,c}:G_3 \rightarrow G_3$, where $c\in G_3$, is defined by:
\[
b \in G_3 \Rightarrow F^{\gC}_{3,c}(b) = cb.
\]
\end{enumerate}
\end{enumerate}
In the same vein as in the proof of Corollary \ref{a20}, we observe that
\begin{enumerate}
\item[$(\ast\ast)$]
\begin{enumerate}
	\item[(a)]  $\gA_{\bfc_{1,2}} = \sort_1(\gC),\gB_{\bfc_{1,2}} =
	\sort_{1,2}(\gC)$,
	
	\item[(b)]  $\gA_{\bfc_{2,3}} = \sort_{1,2}(\gC),\gB_{\bfc_{2,3}} = \gC$,
	
	\item[(c)]  $\gA_{\bfc_{1,3}} = \sort_1(\gC),\gB_{\bfc_{1,3}} = \gC$.
\end{enumerate}
\end{enumerate}
By our  construction, it is easily seen that $\varphi_{\bfc_{1,3}}$ has no weak lifting,
and
    $\varphi_{\bfc_{2,3}}$ has a lifting.

In view of Corollary \ref{a14}, we are able to find a forcing extension $V[G]$ of the universe in which
$\bfc_{1, 2}$ has no $\chi$-solution. But since $\bfc_{1, 2} = \bfc$, it follows that
$\bfc$ has no $\chi$-solution in $V[G]$. Thus, the theorem is proved.
\end{proof}

\section {A global consistency result} 
Let $\bfc$ be a definable uni-construction problem via a formula $\theta$. Recall that there exists a class function $F:\mathcal K^1_{\bfc} \to \mathcal K^2_{\bfc}$, such that for each $\gB \in \mathcal K^2_{\bfc}$, $F(\sort_1(\gB)) = \gB$, and
$$ F(\gA) = \gB \Leftrightarrow \theta(\gA, \gB), $$
for all two-sorted models $\gB$ with $\gA := \sort_1(\gB)$.
\begin{definition}
	\label{yu11}Let $c$ be as above. We say $\bfc$ is uniformly  definable
	if there is a formula $\Theta$ so that$$\exists y\theta(y,\gB)\rightarrow\exists y\Theta(y,\gB)\wedge\forall y(\Theta(y,\gB)\rightarrow\theta(y,\gB)).$$
\end{definition}

\begin{definition}\label{f}For an infinite cardinal $\lambda$, let $\mathbb{S}_\lambda$ be the  forcing notion
\[
\mathbb{S}_\lambda=\{ p: \lambda^{++} \times \lambda^{++} \times \lambda^{++} \rightarrow 2: |p| \leq \lambda         \},
\]
ordered by reverse inclusion. \end{definition}Thus  $\mathbb{S}_\lambda$ is forcing equivalent to
$\text{Add}(\lambda^+, \lambda^{++})$, the Cohen forcing for adding $\lambda^{++}$-many Cohen subsets of
$\lambda^+.$ Furthermore, in view of \cite[Lemma 5.2]{Sh:301}, we observe that it is $\lambda^{+}$-closed and satisfies the $\lambda^{++}$-c.c.

\begin{fact}
	\label{local-forcing} (Hodges-Shelah) Let $M$ be an inner model of $\text{ZFC}+\text{GCH}$ and let
	$\lambda$ be an infinite cardinal of $M$. Let $\mathbb{Q}$ be the forcing notion
	$\mathbb{S}_\lambda$ as computed in $M$ and let $\bold G$ be $\mathbb{Q}$-generic over $M$. Then
	the following holds in $M[\bold G]$:
	\begin{enumerate}
		\item[$(*)_\lambda:$] suppose $\bold{c}$ is a uniformisable uni-construction problem, such that $\bold c$ is defined using parameters from $V$,
		$\mathcal{B}_{\bold c} \in V$ and $\mathcal{B}_{\bold c}$ and $\aut(\mathcal{B}_{\bold c})$
		have size $\leq \lambda.$ Then  $\bold{c}$ is weakly natural.
	\end{enumerate}
\end{fact}

\begin{proof}
This is in \cite[Theorem 5.1]{Sh:301}.
\end{proof}
In this section, we are going to prove a global version of this theorem, which removes both the cardinality assumption and the parameter assumption from the above result. The proof uses the reverse Easton iteration of forcing notions, where we refer to \cite{baum} and \cite[Chapter 21]{jech} for more details on this subject.

\begin{theorem}
	\label{global-forcing} Let $\bold{c}$ be a uniformisable uni-construction problem. There exists a $\text{GCH}$ and  cofinality preserving
	class generic extension $V[\bold G]$ of the universe in which    $\bold{c}$ is weakly natural.
\end{theorem}
\begin{proof}
For a given an infinite cardinal $\lambda$, let $\mathbb{S}_\lambda$ be as Definition \ref{f}.
	Let
	\[
	\mathbb{P}=\big\langle \langle  \mathbb{P}_\lambda: \lambda \in \Ord          \rangle, \langle  \name{\mathbb{Q}}_\lambda: \lambda \in \Ord    \rangle\big\rangle
	\]
	be the reverse Easton iteration of forcing notions, such that
	for each ordinal $\lambda, \name{\mathbb{Q}}_\lambda$ is forced to be the trivial forcing notion except
	$\lambda$ is an infinite cardinal, in which case we let
	\begin{center}
		$\Vdash_{\mathbb{P}_\lambda}$``$\name{\mathbb{Q}}_\lambda= \name{\mathbb{S}}_\lambda$''.
	\end{center}
	Thus, a condition in $\mathbb{P}$ is a partial function $p$ such that:
	\begin{enumerate}
		\item $\dom(p)$ is a set of ordinals,
		
		\item if $\lambda \in \dom(p),$ then $p \upharpoonright \lambda \Vdash_{\mathbb{P}_\lambda}$``$p(\lambda) \in \name{\mathbb{Q}}_\lambda$'',
		
		\item for any regular cardinal $\kappa$,
		$|\supp(p) \cap \kappa| < \kappa$, where
		\[
		\supp(p):=\big\{\lambda \in \dom(p): p \upharpoonright \lambda \Vdash_{\mathbb{P}_\lambda} \text{``}p(\lambda) \neq 1_{\name{\mathbb{Q}}_\lambda} \text{''}    \big\}.
		\]
		
	\end{enumerate}
	Let also
	\begin{center}
		$\bold G = \big\langle \langle   \bold G_\lambda:  \lambda \in \Ord  \rangle, \langle  \bold H_\lambda:  \lambda \in \Ord  \rangle\big\rangle$
	\end{center}
	be $\mathbb{P}$-generic over $V$. Thus for each infinite cardinal $\lambda$,
	$\bold G_\lambda = \mathbb{P}_\lambda \cap \bold G$ is $\mathbb{P}_\lambda$-generic over $V$ and $H_\lambda$ is
	$ \name{\mathbb{S}}_\lambda[\bold G_\lambda]$-generic over $V[\bold G_\lambda]$.
	We are going to show that $V[\bold G]$ is as required. To this end, let us recall the following well-known result from \cite{baum} (also see \cite[Chapter 21]{jech}).
	\begin{fact}
		\label{lem1}Adopt the above notation. Then the following assertions are valid:
		\begin{enumerate}
			
			\item $V[\bold G]$ is a $\text{GCH}$ and  cofinality preserving
			class generic extension of $V$,
			
			\item  $V[\bold G]$ and $ V[\bold G_\lambda]$ contain the same $\lambda$-sequences of ordinals.
		\end{enumerate}
	\end{fact}
	
	Recall that $\bold c$ is a uniformisable uni-construction problem. Let $\lambda$ be a large enough  cardinal such that
	$\mathcal{B}_{\bold c}$, $\aut(\mathcal{B}_{\bold c})$ and the parameters occurring in the definition of
	$\bold c$ and the formula uniformizing it are all in $ V[\bold G_\lambda]$, and $|\mathcal{B}_{\bold c}|, |\aut(\mathcal{B}_{\bold c})| \leq \lambda$.
	By Fact $\ref{local-forcing},$ applied to the model  $V[\bold G_\lambda]$ and the
	uni-construction problem $\bold c,$ we   conclude that
	\begin{center}
		$V[\bold G_\lambda][H_\lambda] \models$``$\bold c$ is weakly natural''.
	\end{center}
	On the other hand, $V[\bold G]$ is a  generic extension of $V[\bold G_\lambda][H_\lambda]$ by a class forcing notion which adds no new subsets to
	$\lambda^+.$  Thus
	\begin{center}
		$V[\bold G] \models$``$\bold c$ is weakly natural''.
	\end{center}
Thus, the theorem is proved.
\end{proof}

\section{Uniformity} 

In this section, we address Problem \ref{p4}. Our main result is Theorem \ref{d2}, which can be viewed as a generalization of \cite[Theorem 3]{Sh:160}. We present our result in the context of two-sorted models.

\begin{theorem}
	\label{d2}
Let $\tau$ be a
vocabulary, and let
$\mathcal K$ be a class of $\tau$-models such that

	\begin{enumerate}
	\item[(i)]  $\mathcal K$ is first order definable
	from a parameter $\bfp$,
	
	\item[(ii)]  every $\gB \in \mathcal K$ is two sorted,	\item[(iii)]  the natural homomorphism $\varphi_{\gB}:\aut(\gB)\twoheadrightarrow\aut(\sort_1(\gB))$ splits.
\end{enumerate}
 Then
	there exists a  class
	function $F$, which is uniformly definable from
	the parameter $\bfp$ and furnished the following assertions:
	\begin{enumerate}
		\item[(a)]  the domain of $F$ is $   \mathcal K_1 = \{\sort_1(\gB):\gB \in \mathcal K\}$,
		
		\item[(b)]  if $\gA \in \mathcal K_1$, then $F(\gA) \in \mathcal K$ and $\sort_1(F(\gA)) = \gA$.
	\end{enumerate}
\end{theorem}

\begin{proof}
	We prove the theorem through a sequence of claims.
	First, we show that the problem can be reduced to the case where $\mathcal{K}$ has only one equivalence class.
	To this end, we define the class function $\bfH$ with $\dom(\bfH) = \mathcal{K}$, as follows:
	\begin{center}
		$\bfH(\gB) := \bigg\{\gB':\gB'$ is a $\tau$-model isomorphic to
		$\gB$  with universe equal to the cardinal $\|\gB\|\bigg\}$.
	\end{center}
Clearly, $\bfH$ is definable from the parameter $\bfp$. For every $\bfx \in \Range(\bfH)$, we set
	
	\begin{enumerate}
		\item   $\mathcal K_{\bfx} := \{\gB \in \mathcal K:\bfH(\gB) = \bfx\}$,
		
		\item   $\mathcal K_{\bfx,1}: = \{\sort_1(\gB):\gB \in \mathcal K_{\bfx}\}$.
	\end{enumerate}
	
We remark that $\langle K_\bfx: \bfx \in \Rang(\bfH)\rangle$ is a partition of $\mathcal K$, which is uniformly definable using the parameter $\bfp$, so it suffices to uniformly deal with $\mathcal K_{\bfx}$ for each ${\bfx} \in \Range(\bfH)$. Thus, let us fix some ${\bfx} \in \Range(\bfH)$.
Now, we turn our attention to $\Upsilon$. By definition, it is the family of all pairs $(\gB, \psi)$ equipped with the following properties:
\begin{enumerate}
	\item[$\Upsilon_1$)] $\gB \in \mathcal K$ is such that for some $\gB' \in \bfx$, the function $b \mapsto (\gB', b)$, for $b \in \gB'$, is an isomorphism from $\gB'$ onto $\gB$.
	\item[$\Upsilon_2$)] $\psi$ is a weak lifting corresponding to $\gB$, more precisely, a lifting for $\varphi_\gB$.
\end{enumerate}

Let us collect all of them under a new name:
\begin{enumerate}
	\item[(3)] Let $\bfy_{\bfx} := \big\{(\gB, \psi): (\gB, \psi) \in \Upsilon \big\}$.
\end{enumerate}

To simplify things, we present a series of claims (see Claims \ref{cla1}-\ref{cla6}):
	\begin{claim}
		\label{cla1} Let $\bfy_{\bfx}$ be as $(3)$.
	Then	$\bfy_{\bfx}$ is non-empty.
	\end{claim}
	\begin{proof}
		By our hypothesis, for each $\gB \in \mathcal K$,  there is a weak lifting
		for $\varphi_\gB$, and hence $\bfy_{\bfx}$ is non-empty, as requested.
	\end{proof}
	Let $S = \bfy_{\bfx},$ and for each $s \in S$ set $(\gB_s,\psi_s)=s$. It then follows that
	 $\bfy_{\bfx} = \{(\gB_s,\psi_s):s \in S\}.$
	For any $s \in S$, we set
	\begin{enumerate}
		\item[(4)]
		$\gA_s :=  \sort_1(\gB_s).$
	\end{enumerate}
	By replacing each $\gB_s$ with $\gB_s \times \{s\}$ if necessary, we may assume that the $\gB_s$'s are pairwise disjoint.
	For each $b \in \bigcup\limits_{s \in S} \gB_s$, we define
	\[
	r(b) := \text{the unique } s \in S \text{ such that } b \in \gB_s.
	\]
	\begin{claim}
		\label{cla2}
		$\bfx$ is definable from $\bfy_{\bfx}$.
	\end{claim}
	\begin{proof}
		Recall that $\bfx$ is equal to $$\bigg\{\gB': \exists (\gB,\psi) \in \bfy_{\bfx}  \big(\emph{the function } b \mapsto (\gB',b)
		\emph{ induces }   \gB'\stackrel{\cong}\longrightarrow\gB \big)  \bigg\},$$ which establishes the result.
	\end{proof}

 Let $\big\langle h_{s, t}:\gA_s\stackrel{\cong}\longrightarrow \gA_t\big\rangle_{s, t
	\in S}$ be a family of isomorphisms. Let us introduce the following terminology .	We say the  $\big\langle h_{s, t}:\gA_s\stackrel{\cong}\longrightarrow \gA_t\big\rangle_{s, t
		\in S}$  is a commutative $\gA_\bullet$-family (or a commutative family over $\gA_\bullet$), provided:
	
		\begin{enumerate}
		\item[$\bullet$] $h_{r,s} \circ h_{s,t} =
		h_{r,t}$ and
	\item[$\bullet$]  $h_{s,s} = \id_{\gA_s}$,
	\end{enumerate}
	where $r,s,t \in S  $. In other terms, we have  the following commutative diagram: $$\xymatrix{
		&  \gA_s\ar[rr]^{h_{s,t}}&&\gA_t\\
		&& \gA_r\ar[ul]^{h_{r,s}}\ar[ur]_{h_{r,t}}
		&&&}$$
	\begin{enumerate}
		\item[(5)]By the first automorphism class of $\bfy_{\bfx}$
		 we mean  $$\aut_1(\bfy_{\bfx}) :=\big \{\bar h = \langle h_{s, t}:s, t
		\in S\rangle:\bar h   \emph{  is a commutative family over } \gB_\bullet\big\}  .$$		
In the same vein, we say the family $\langle f_{s, t}:\gB_s\stackrel{\cong}\longrightarrow \gB_t\rangle_{s, t
			\in S}$ of   isomorphisms is a commutative $\gB_\bullet$-family provided
	$f_{r,s} \circ f_{s,t} =
		f_{r,t}$ and $f_{s,s} = \id_{\gB_s}$ for all $r,s,t \in S  $. So, the diagram $$\xymatrix{
			&  \gB_s\ar[rr]^{f_{s,t}}&&\gB_t\\
			&& \gB_r\ar[ul]^{f_{r,s}}\ar[ur]_{f_{r,t}}
			&&&}$$is commutative.

		\item[(6)] 	By the second automorphism class of $\bfy_{\bfx}$, we mean as follows:$$\aut_2(\bfy_{\bfx}):=\big \{\bar f = \langle f_{s, t}:s, t
		\in S\rangle: \bar f  \emph{  is a commutative family over } \gB_\bullet\big\}.$$
	\end{enumerate}

	\begin{enumerate}
		\item[(7)]	For $\gA \in\mathcal K_{\bfx,1}$, the isomorphism class of $\gA$ with respect to $\bfy_{\bfx}$, is defined by:\begin{center}
		$\iso_{\bfy_{\bfx}}(\gA) :=\{\bar\pi:\bar\pi =
		\langle \pi_s:s \in S\rangle \emph{ such that } \pi_s: \gA_s\stackrel{\cong}\longrightarrow\gA\}. $\end{center}
	\end{enumerate}
	Suppose $\bar\pi \in \iso_{\bfy_{\bfx}}(\gA)$ and $s,t \in S$. We define 	$h_{\bar\pi,s,t}$ and $\bar h_{\bar\pi}$ as follows:
	\begin{enumerate}
		\item[(8)]
		$h_{\bar\pi,s,t} = \pi^{-1}_t \circ \pi_s
		\in \iso(\gA_s,\gA_t),$
		
		\item[(9)] $\bar h_{\bar\pi}=  \langle h_{\bar\pi,s,t}: s, t \in S \rangle.$
	\end{enumerate}
	This property can be summarized by the following  diagram:
	$$\xymatrix{
		&  \gA_s\ar[dr]_{\pi_s}\ar[rr]^{ h_{\bar\pi,s,t}}&&\gA_t\\
		&& \gA\ar[ur]_{\pi^{-1}_t}
		&&&}$$
	The proof of next claim is evident.
	\begin{claim}
		Suppose $\gA \in \mathcal K_{\bfx,1}$ and $\bar\pi \in \iso_{\bfy_{\bfx}}(\gA)$. Then $\bar h_{\bar\pi} \in \aut_1(\bfy_{\bfx}).$
	\end{claim}
	
Recall that $\psi_s$ is a weak lifting corresponding to $\gB_s$.
For $\gA \in \mathcal{K}_{\bfx,1}$, suppose that $\bar{\pi} \in \iso_{\bf y_{\bfx}}(\gA)$ and
$\bar{g} \in \aut_2(\bfy_{\bfx})$ satisfy the conditions
$g_{s,s} = \psi_s(h_{\bar{\pi},s,s})$ and $h_{\bar{\pi},s,t} \subseteq g_{s,t}$ for all $s,t \in S$.
Adopt the previous notation, and let  $\bar{b} = \langle b_s : s \in S \rangle$.
	We say the triple $(\bar\pi,\bar g,\bar b)$ is \emph{matched},   if $\bar\pi,$
	$\bar g$ and $\bar b$ are as above,
and 	$b_s \in \gB_s$ we have
	$g_{s,t}(b_t) = b_s$ for $s,t \in S$.

Let $\Sigma(\bfx, \gA)$ denote the family of all matched triples $(\bar\pi,\bar g,\bar b)$.
 The following diagram summarizes the above situation:
	$$\xymatrix{
		&b_s\in\gB_s\ar[rr]^{g_{s,t}}&&\gB_t\ni b_t	\\&  \gA_s\ar[dr]_{\pi_s}\ar[rr]^{ h_{\bar\pi,s,t}}\ar[u]^{\subseteq}&&\gA_t\ar[u]_{\subseteq}\\
		&& \gA\ar[ur]_{\pi^{-1}_t}
		&&&}$$In order to define the ``universal model'', we first consider to:
	\begin{enumerate}
		\item[(10)]
		$X_{\bfy_{\bfx}}(\gA) := \big\{(\bar\pi,\bar g,\bar b):(\bar\pi,\bar g,\bar b) \in
		\Sigma(\bfx, \gA)\big\}$.
	\end{enumerate}
For $\gA \in\mathcal K_{\bfx, 1}$, let $E_{\gA} $ be
	the following two place relation on $X_{\bfy_{\bfx}}(\gA)$:
	$$(\bar\pi,\bar g_1,\bar b_1) E_{\gA} (\pi_2,\bar
	g_2,\bar b_2)$$
	if and only if the following two conditions are valid:
	\begin{itemize}
		\item  $(\bar\pi_1,\bar g_1,\bar b_1)$ and $(\bar\pi_2,\bar g_2,\bar
		b_2)$  belong to $X_{\bfy_{\bfx}}(\gA)$,
		\sn
		\item  letting $\widetilde{\pi}_s = \pi^{-1}_{1,s} \pi_{2,s}$ and $h_s =
		\psi_s(\widetilde{\pi}_s^{-1})$, we have $h_s(b_{1,s}) = b_{2,s}$.
	\end{itemize}
	Note that  $\widetilde{\pi}_s$ is an
	automorphism of $\gA_s$, hence so is $\widetilde{\pi}_s^{-1}$. Consequently,  $h_s =
	\psi_s(\widetilde{\pi}_s^{-1})$ is a member of $H_s = \aut(\gB_s)$ which extends $\widetilde{\pi}_s$. Since $b_{1,2},b_{2,s} \in \gB_s$ so $h_s(b_{1,s})=b_{2,s} \in
	\gB_s$ is meaningful.
	
	\begin{claim}Let $E_{\gA} $ be
		the above relation on $X_{\bfy_{\bfx}}(\gA)$.
	Then	$E_{\gA}$ is an equivalence relation.
	\end{claim}
	\begin{proof}
		It is clearly reflexive and symmetric. To show that  $E_{\gA}$ is transitive, assume that
		\begin{enumerate}
			\item[(a)]  $(\bar\pi_1,\bar g_1,\bar b_1) E_{\gA} (\bar\pi_2,\bar
			g_2,\bar b_2)$,
			
			\item[(b)]  $(\bar\pi_2,\bar g_2,\bar b_2) E_{\gA} (\bar\pi_3,\bar g_3,
			\bar b_3)$.
		\end{enumerate}
	According to the definition of $E_{\gA}$, we have:

		\begin{enumerate}
			\item[(c)]  $(\psi_s(\pi^{-1}_{1,s} \pi_{2,s})^{-1})(b_{1,s}) = b_{2,s}$ for $s \in S$
			
			\item[(d)]  $(\psi_s(\pi^{-1}_{2,s} \pi_{3,s})^{-1})(b_{2,s}) =
			b_{3,s}$ for $s \in S$.
		\end{enumerate}
		Hence
		$$\begin{array}{ll}
		b_{3,s}&=(\psi_s(\pi^{-1}_{2,s} \pi_{3,s})^{-1})(b_{2,s})\\
		&= (\psi_s(\pi^{-1}_{2,s} \pi_{3,s})^{-1})((\psi_s(\pi^{-1}_{1,s} \pi_{2,s})^{-1})(b_{1,s}))\\
		&=(\psi_s \big( (\pi^{-1}_{2,s} \pi_{3,s})^{-1} \circ  (\pi^{-1}_{1,s} \pi_{2,s})^{-1}  \big)(b_{1,s}))\\
		&=\psi_s((\pi^{-1}_{1,s} \pi_{3,s})^{-1})(b_{1,s}),
		\end{array}$$
		as requested.
	\end{proof}
For simplicity, we assume that the structures $\gB$ are relational, so they only contain relations. This is simply because every function can be identified by a relation, satisfying some extra conditions.

	We define a model $\gB'_{\gA}$ as follows:
	\begin{enumerate}
		\item[(11.1)] the universe of $\gB'_{\gA}$ is $X_{\bfy_{\bfx}}(\gA)$,
		
	\item[(11.2)] For any $R \in \tau$ which is an $n$-place relation, we set $$R^{\gB'_{\gA}}:=\bigg\{ \big\langle  (\bar\pi_\ell,\bar g_\ell,\bar b_\ell): \ell < n   \big\rangle \in X_{\bfy_{\bfx}}(\gA)^n:
		\exists s \in S  \emph{ such that  } \langle b_{\ell, s}: \ell < n    \rangle \in R^{\gB_s}         \bigg \}.$$
	\end{enumerate}
	\begin{claim}
		\label{cla3}
		In the definition of $R^{\gB'_{\gA}}$, we can replace ``$\exists s \in S$'' by ``$\forall s \in S$''.
	\end{claim}
	\begin{proof}
		Suppose $s, t \in S$ and $\langle (\bar\pi_\ell,\bar g_\ell,\bar b_\ell): \ell < n   \rangle \in X_{\bfy_{\bfx}}(\gA)^n$. We have to show that
		\[
		\langle b_{\ell, s}: \ell < n    \rangle \in R^{\gB_s} \Leftrightarrow \langle b_{\ell, t}: \ell < n    \rangle \in R^{\gB_t}.
		\]
		This follows from the fact that $g_{s, t}: \gB_s \rightarrow \gB_t$ is an isomorphism and $g_{s,t}(b_{\ell, s})=b_{\ell, t}$.
	\end{proof}
	\begin{claim}
		\label{cla4} Let $\iota=1, 2$ and $n$ be any integer.
		Assume $(\bar\pi^\iota_\ell,\bar g^\iota_\ell,\bar b^\iota_\ell)\in X_{\bfy_{\bfx}}(\gA)$
		and  $(\bar\pi^1_\ell,\bar g^1_\ell,\bar b^1_\ell) E_{\gA} (\bar\pi^2_\ell,\bar g^2_\ell,\bar b^2_\ell) $  for all $ \ell < n$.
		Then
		\[
		\langle (\bar\pi^1_\ell,\bar g^1_\ell,\bar b^1_\ell): \ell < n      \rangle \in R^{\gB'_{\gA}} \Leftrightarrow \langle (\bar\pi^2_\ell,\bar g^2_\ell,\bar b^2_\ell): \ell < n      \rangle \in R^{\gB'_{\gA}}.
		\]
	\end{claim}
	\begin{proof}
	Assume that $\langle (\bar\pi^1_\ell, \bar g^1_\ell, \bar b^1_\ell) : \ell < n \rangle \in R^{\gB'_{\gA}}$.
	For each $\ell < n$, we have
	\[
	(\bar\pi^1_\ell, \bar g^1_\ell, \bar b^1_\ell) E_{\gA} (\bar\pi^2_\ell, \bar g^2_\ell, \bar b^2_\ell).
	\]
	This, in turn, implies that
	\[
	\psi_s((\pi^{2}_{\ell, s})^{-1} \pi^1_{\ell, s})(b^1_{\ell, s}) = b^2_{\ell, s},
	\]
	where $s \in S$.
	
	Since $\psi_s((\pi^{2}_{\ell, s})^{-1} \pi^1_{\ell, s})$ is an automorphism of $\gB_s$, we obtain
	\[
	\langle b^1_{\ell, s} : \ell < n \rangle \in R^{\gB_s} \Leftrightarrow \langle b^2_{\ell, s} : \ell < n \rangle \in R^{\gB_s}.
	\]
	Thus, we conclude that
	\[
	\begin{array}{ll}
	\big\langle (\bar\pi^1_\ell, \bar g^1_\ell, \bar b^1_\ell) : \ell < n \big\rangle \in R^{\gB'_{\gA}}
	&\Rightarrow \exists s \in S \text{ such that } \langle b^1_{\ell, s} : \ell < n \rangle \in R^{\gB_s} \\
	&\Rightarrow \langle b^2_{\ell, s} : \ell < n \rangle \in R^{\gB_s} \\
	&\Rightarrow \big\langle (\bar\pi^2_\ell, \bar g^2_\ell, \bar b^2_\ell) : \ell < n \big\rangle \in R^{\gB'_{\gA}}.
	\end{array}
	\]
	By symmetry, we also obtain
	\[
	\big\langle (\bar\pi^2_\ell, \bar g^2_\ell, \bar b^2_\ell) : \ell < n \big\rangle \in R^{\gB'_{\gA}}
	\Rightarrow \big\langle (\bar\pi^1_\ell, \bar g^1_\ell, \bar b^1_\ell) : \ell < n \big\rangle \in R^{\gB'_{\gA}}.
	\]
	Thus, the claim follows.
	\end{proof}
	In   light of Claim \ref{cla4} we observe that $E_{\gA}$ is a congruence relation on $\gB'_{\gA}$. Here, we define a function $\bold k$ with domain $\gA$
	as follows:
	\[
	\bold k(a) = \big\{ (\bar\pi,\bar g,\bar b) \in X_{\bfy_{\bfx}}(\gA): b_s=\pi_s^{-1}(a) \text{~for all~} s \in S  \big \}.
	\]
	\begin{claim}
		\label{cla5} If $a \in \gA$, then $\bold k(a)$ is an $E_{\gA}$-equivalence class.
	\end{claim}
	\begin{proof}
	Let us first show that $\bold{k}$ is closed under the relation $E_{\gA}$, in the sense that
	$(\bar\pi_2, \bar g_2, \bar b_2) \in \bold{k}(a)$
	provided that $(\bar\pi_1, \bar g_1, \bar b_1) \in \bold{k}(a)$
	and $(\bar\pi_1, \bar g_1, \bar b_1) E_{\gA} (\bar\pi_2, \bar g_2, \bar b_2).$
	
	To this end, let $s \in S$ and recall that
	\begin{itemize}
		\item $b_{1, s} = \pi_{1, s}^{-1}(a),$
		\item $\psi_s(\pi^{-1}_{2,s} \pi_{1,s})(b_{1,s}) = b_{2,s}.$
	\end{itemize}
	
	It then follows that
	\[
	\begin{array}{ll}
	b_{2, s} &= \psi_s(\pi^{-1}_{2,s} \pi_{1,s})(b_{1,s})\\
	&= \psi_s(\pi^{-1}_{2,s} \pi_{1,s})(\pi_{1, s}^{-1}(a))\\
	&= \pi^{-1}_{2,s} \pi_{1,s} \pi_{1, s}^{-1}(a)\\
	&= \pi^{-1}_{2,s}(a).
	\end{array}
	\]
	In sum, $(\bar\pi_2, \bar g_2, \bar b_2) \in \bold{k}(a)$, as required.
	
	Next, we show that
	\[
	(\bar\pi_1, \bar g_1, \bar b_1) E_{\gA} (\bar\pi_2, \bar g_2, \bar b_2)
	\]
	provided that $(\bar\pi_1, \bar g_1, \bar b_1), (\bar\pi_2, \bar g_2, \bar b_2) \in \bold{k}(a)$.
	
	Let $s \in S$ and set $h_s = \psi_s(\pi^{-1}_{2,s} \pi_{1,s})$.
	We have to show that $h_s(b_{1, s}) = b_{2, s}$:
	\[
	\begin{array}{ll}
	h_s(b_{1, s}) &= \psi_s(\pi^{-1}_{2,s} \pi_{1,s})(b_{1, s})\\
	&= \psi_s(\pi^{-1}_{2,s} \pi_{1,s})(\pi_{1,s}^{-1}(a))\\
	&= \pi^{-1}_{2,s} \pi_{1,s} \pi_{1,s}^{-1}(a)\\
	&= \pi^{-1}_{2,s}(a)\\
	&= b_{2,s}.
	\end{array}
	\]
	Thus, the proof is complete.
	\end{proof}
Now, we are ready to introduce the universal model:
	\begin{claim}
		\label{cla6}
		$\gB'_{\gA}/ E_{\gA}$ is isomorphic to $\gB_s,$ for $s \in S.$
	\end{claim}
	\begin{proof}
Fix $s \in S$, and let $\phi: \gA_s \simeq \gA$ be an isomorphism. Set
		$$Y_{s,\phi}:=\{(\bar\pi,\bar g,\bar b) \in X_{\bfy_{\bfx}}(\gA): \pi_s=\phi      \},$$ and define a function
		\[
		\rho_{s, \phi}: Y_{s,\phi} \rightarrow \gB_s
		\]
		as $\rho_{s, \phi}((\bar\pi,\bar g,\bar b))=b_s$. Clearly,
		$\rho_{s, \phi}$ is  well-defined. Next, we bring the following two auxiliary observations:
		\begin{itemize} {\label{a9.1}}
			\item[$(*)_{5.11.1}$] Suppose $x_1=(\bar\pi_1,\bar g_1,\bar b_1)$ and $x_2=(\bar\pi_2,\bar g_2,\bar b_2)$ are in $Y_{s,\phi}$. Then
			\[
			x_1 E_{\gA} x_2 \Rightarrow \rho_{s, \phi}(x_1) = \rho_{s, \phi}(x_2).
			\]
		\end{itemize}
Indeed, suppose that $x_1 E_{\gA} x_2$. It follows that
\[
\psi_s(\pi^{-1}_{2,s} \pi_{1,s})(b_{1, s}) = b_{2, s}.
\]
But as $\pi_{1,s} = \phi = \pi_{2, s}$, we have
\[
\psi_s(\pi^{-1}_{2,s} \pi_{1,s})(b_{1, s}) = \psi_s(\id_{\gA_s})(b_{1, s}) = \id_{\gB_s}(b_{1, s}) = b_{1, s},
\]
and thus $\rho_{s, \phi}(x_1) = b_{1, s} = b_{2, s} = \rho_{s, \phi}(x_2)$.
This completes the proof of $(*)_{5.11.1}$.

Suppose $\bar\pi_2 = \langle \pi^2_s : s \in S \rangle \in \iso_{\bfy_{\bfx}}(\gA)$ and let
$x_1 = (\bar\pi_1, \bar g_1, \bar b_1) \in X_{\bfy_{\bfx}}(\gA)$.
Recall that $\widetilde{\pi}_s = \pi^{-1}_{1, s} \pi_{2, s}$.
For simplicity, we set:
\begin{itemize}
	\item $\widetilde{\Pi}_s := \psi_s(\widetilde{\pi}_s) \in \aut(\gB_s)$.
\end{itemize}
	
	The second auxiliary observation is:

		\begin{itemize}{\label{a9.2}}
			\item[$(*)_{5.11.2}$] Suppose $\bar\pi_*=\langle \pi^*_s: s \in S  \rangle \in \iso_{\bfy_{\bfx}}(\gA)$ and $s(*) \in S$. Let
			$x_1=(\bar\pi_1,\bar g_1,\bar b_1) \in X_{\bfy_{\bfx}}(\gA)$.  Then there is $x_2=(\bar\pi_2,\bar g_2,\bar b_2) \in [x_1]_{E_{\gA}}$
			such that
			\begin{itemize}
				\item[$\bullet$]  $\bar\pi_2=\bar\pi_*$ and
					\item[$\bullet$]  $b_{2, s(*)}=\widetilde{\Pi}_{s(*)}^{-1}b_{1, s(*)}$.
			\end{itemize}

		\end{itemize}
	[Why? In order to define $x_2$, we set
			\begin{itemize}
				\item[(i)] $\bar\pi_2=\bar\pi_*$,
				\item[(ii)] for $s, t \in S$, $g_{2, t, s}= \psi_t ((\pi^{-1}_{1, t} \pi_{2, t})^{-1})\circ g_{1, t, s} \circ \psi_s(\pi^{-1}_{1, s}\pi_{2, s})$.

The following   commutative diagram summarizes the above situation:
				
$$\quad\quad\quad\quad\quad\quad\quad\quad\xymatrix{
& 	 \gB_t\ar[rrr]^{g_{2, t, s}} &&&\gB_s	\\
					&& \gB_t\ar[ul]_{\widetilde{\Pi}_t}\ar[r]^{g_{1, t, s}} &\gB_s \ar[ur]^{\widetilde{\Pi}_s}\\
					&\gA_t\ar[uu]^{\subseteq}&&&\gA_s\ar[uu]_{\subseteq}\\
					&\gA \ar[u]^{\pi_{2, t}^{-1}}   &\ar[r]_{h_{1, t, s}}
					\gA_t \ar[l]^{\pi_{1, t}}\ar[ul]_{\widetilde{\pi}_t}\ar[uu]_{\subseteq}&
					\gA_s\ar[uu]^{\subseteq}\ar[r]_{\pi_{1, s}}\ar[ur]^{\widetilde{\pi}_s}  &\gA \ar[u]_{\pi_{2, s}^{-1}}
					&&&
&&}$$				
Let us depict the resulting   commutative diagram:
				$$\xymatrix{
					&&\gB_s	\\	&\gB_t\ar[dr]_{\widetilde{\Pi}_t^{-1}}\ar[ru]^{g_{2, t, s}}& &\gB_s \ar[ul]_{\widetilde{\Pi}_s}	\\& 	 &\gB_t\ar[ur]_{g_{1, t, s}}
					&&&}$$
			\end{itemize}
			It is easily seen that $x_2$ as defined above is as required. Indeed, by (i), $\bar\pi_2=\bar\pi_*$.
			Also, it is clear that for each $s \in S$, $g_{2, s, s}=\id_{\gB_s}$. 
	Next, we show   the following diagram $$\xymatrix{
				&  \gB_s\ar[rr]^{ g_{2,s, t}}&&\gB_t\\
				&& \gB_r\ar[ul]^{ g_{2,r, s}}\ar[ur]_{ g_{2,r, t}}
				&&&}$$is commutative where
			$r, s, t \in S$.
	 To check this, we first recall that
			$\widetilde{\pi}_s = \pi^{-1}_{1,s} \pi_{2,s}$, then
			we have
			$$\begin{array}{ll}
			g_{2,r, s} \circ g_{2, s, t}&= \big(\psi_r (\widetilde{\pi}_r)^{-1}\circ g_{1, r, s} \circ \psi_s(\widetilde{\pi}_s)\big) \circ
			\big(\psi_s (\widetilde{\pi}_s)^{-1}\circ g_{1, s, t} \circ \psi_t(\widetilde{\pi}_t)\big)\\
			&= \psi_r (\pi^{-1}_{1, r} \pi_{2, r})^{-1}\circ g_{1, r, s} \circ g_{1, s, t} \circ \psi_t(\pi^{-1}_{1, t}\pi_{2, t})\\
			&= \psi_r (\pi^{-1}_{1, r} \pi_{2, r})^{-1}\circ g_{1, r, t} \circ \psi_t(\pi^{-1}_{1, t}\pi_{2, t}) \\
			&= g_{2, r, t}.
			\end{array}$$Also,
				$$\begin{array}{ll}
			g_{2,s(\ast), s} (b_{2,s(\ast)})&= g_{2,s(\ast), s} (\widetilde{\Pi}_{s(\ast)}^{-1}b_{1,s(\ast)})\\
			&=\widetilde{\Pi}_{s}^{-1}g_{1,s(\ast), s}(b_{1,s(\ast)})\\
			&= \widetilde{\Pi}_{s}^{-1}(b_{1,s}) \\
			&=b_{2,s}.
			\end{array}$$
			In summary, we have proved that $$x_2=(\bar\pi_2,\bar g_2,\bar b_2) \in X_{\bfy_{\bfx}}(\gA).$$ Next, we are going to show that
			$x_2 E_{\gA} x_1.$ To this end, let $s \in S$, and set	\begin{itemize}
				\item  $\pi_s=\pi_{1, s}^{-1}\pi_{2, s}$,
				\item $h_s =\psi_s(\pi_s^{-1})$.
			\end{itemize}   Then:
			$$\begin{array}{ll}
			h_s(b_{1,s})&= \psi_s(\pi_{2, s}^{-1}\pi_{1, s})(b_{1,s})\\&=\widetilde{\Pi}_s(g_{1, s(*), s}(b_{1, s(*)}))\\&=g_{2, s, s(*)}(\widetilde{\Pi}_{s(*)}b_{1, s(*)})
\\&=g_{2, s, s(*)}(b_{2, s(*)})
\\
&=
b_{2,s},
			\end{array}$$
i.e., $h_s(b_{1, s})=b_{2, s}$, from which the equivalence
			$x_2 E_{\gA} x_1$ follows. This completes the proof of $(*)_{5.11.2}$. ]
			
Let us combine $(*)_{5.11.1}$ along with $(*)_{5.11.2}$ and deduce:
		\[
		\gB'_{\gA}/ E_{\gA} = Y_{s,\phi}/ E_{\gA} \simeq \gB_s.
		\]
	Thus, Claim \ref{cla6} follows.
	\end{proof}

	Now, we proceed the proof of  Theorem \ref{d2}.
	Given any
	$\gA \in \mathcal K_{\bfx, 1}$, we are going to define an isomorphism
	$$F_{\gA}: \gA \rightarrow \sort_1(\gB'_{\gA}/E_{\gA}),$$ uniformly definable from $\bfp$. To this end,
	we set
	\[
	F_{\gA}(a) :=\bigg\{\big [(\bar\pi, \bar g, \bar b)\big]_{E_{\gA}}: (\bar\pi, \bar g, \bar b)\in X_{\bfy_{\bfx}}(\gA) \text{~and~} \bigwedge_{s\in S} \pi_s(b_s)=a   \bigg \}.
	\]
In other words, $$F_{\gA}(a)=  \bigg\{ \big [(\bar\pi, \bar g, \bar b)\big]_{E_{\gA}}: (\bar\pi, \bar g, \bar b)\in \bold k(a)       \bigg  \}.$$
Due to Claim \ref{cla5}, we know that $\bold k(a)$ is an $E_{\gA}$-equivalence class, and in particular, $F_{\gA}(a)$ is a singleton. Recall from Claim \ref{cla6} that we have constructed the 2-sorted model $\gB'_{\gA} / E_{\gA}$. Now, for any $\gA \in \mathcal K_{\bfx, 1}$, we define $\gB_{\gA}$ as follows
	\begin{itemize}
		\item $\gB_{\gA}$ has as universe $|\gA| \cup \sort_2(\gB'_{\gA} / E_{\gA})$,
		\item the mapping $F_{\gA} \cup \id_{\sort_2(\gB'_{\gA}/E_{\gA})}$ is an isomorphism from $\gB_{\gA}$ onto $\gB'_{\gA}/E_{\gA}$.
	\end{itemize}
In summary, the function $F(\gA) = \gB_{\gA}$ is the required function we sought, and thus the theorem follows.
\end{proof}

\subsection*{Acknowledgements}
The second author's research has been supported by a grant from IPM (No. 1402030417). The third author is grateful to an individual who prefers to remain anonymous for providing typing services that were used during the work on the paper. The typists paid by Craig  Falls (2022-2025). The
third author would like to thank
ISF 1838/19: The Israel Science Foundation (ISF) (2019-2023).
ISF 2320/23: The Israel Science Foundation (ISF) (2023-2027)
NSF-BSF 2021: grant with M. Malliaris, NSF 2051825, BSF 3013005232 (2021-10-2026-09).
This is publication 1245 of third author. The authors sincerely thank the referee for their thorough review of the paper and for providing valuable comments.


\begin{thebibliography}{}
	
	
	
	\bibitem{adm}
J.	Adamek,  H. Herrlich,
J.	Rosicky,  and W. Tholen,
\emph{Injective hulls are not natural},
Algebra Universalis {\bf{48}} (2002), no. 4, 379-388.


	\bibitem{baum}J. E.  Baumgartner,   \emph{Iterated forcing.} Surveys in set theory, 1-59, London Math. Soc. Lecture Note Ser.   {\bf{87}}, Cambridge Univ. Press, Cambridge, 1983.
	
	
	
	
	
	\bibitem{EIL}
	S. Eilenberg and S. Mac Lane, \emph{General theory of natural equivalences}, Trans. Amer. Math. Soc.  {\bf{58}}
	(1945) 231-294.

	\bibitem{fri}
H. Fredmam, \emph{On the naturalness of definable operations}, Houston
J. Math. {\bf{5}} (1979) 325-330.


	\bibitem{fuchs} L. Fuchs,  \emph{Abelian groups}, Springer Monographs in Mathematics. Springer, Cham, 2015.

\bibitem{lamb}J.
Lambek, \emph{Torsion theories, additive semantics, and rings of quotients}, With an appendix by H. H. Storrer on torsion theories and dominant dimensions. Lecture Notes in Mathematics,   {\bf{177}}, Springer-Verlag, Berlin-New York 1971.

\bibitem{ro} J. J. Rotman, \emph{
An introduction to the theory of groups},
Grad. Texts in Math., {\bf{148}}
Springer-Verlag, New York, 1995, xvi+513 pp.

\bibitem{hodges} W.	Hodges, \emph{Model theory}, Encyclopedia Math. Appl., {\bf{42}}
Cambridge University Press, Cambridge, 1993, xiv+772 pp.




	\bibitem{Sh:160} W.	Hodges and S. Shelah, \emph{Naturality and definability  I}, J. London Math. Soc. (2)  {\bf{33}} (1986), no. 1, 1-12.


	 	\bibitem{Sh:301} W.	Hodges and S. Shelah, \emph{Naturality and definability II}, Cubo  {\bf{21}} (2019), no. 3, 9-27.
	
	\bibitem{jech} T. Jech,  \emph{Set theory. The third millennium edition, revised and expanded.} Springer Monographs in Mathematics. Springer-Verlag, Berlin, 2003. xiv+769 pp. ISBN: 3-540-44085-2
	
	\bibitem{ma}	S. Mac Lane, \emph{Categories for the working mathematician}, (Springer, New York 1971).
	
	
	

	
	
	
	
	
\end{thebibliography}
\end{document}